\newtheorem{theorem}{\bf Theorem}[section]
\newtheorem{lemma}[theorem]{\bf Lemma}
\newtheorem{corollary}[theorem]{\bf Corollary}
\newenvironment{proof}{\noindent{\em Proof:}}{\quad \hfill$\Box$\vspace{2ex}}
\def \bN {\mathbb N}
\def \bZ {\mathbb Z}
\def \bE {\mathbb E}
\def \bR {\mathbb R}
\def \bE {\mathbb E}
\def \cB {{\cal B}}
\def \and {\, \mbox{\rm and}\, }
\def \sinc {\,{\rm sinc}\,}
\def \supp {\,{\rm supp}\,}
\newcommand{\Rmnum}[1]{\expandafter\@slowromancap\romannumeral #1@}
\begin{document}
\title{\bf An Optimal Convergence Rate for the Gaussian Regularized Shannon Sampling Series}
\author{Rongrong Lin\thanks{School of Data and Computer Science, Sun Yat-sen University, Guangzhou 510006, P. R. China. E-mail address: linrr@mail2.sysu.edu.cn.}}
\date{ }
\maketitle

\begin{abstract} We consider the reconstruction of a bandlimited function from its finite localized sample data. Truncating the classical Shannon sampling series results in an unsatisfactory convergence rate due to the slow decay of the sinc function. To overcome this drawback, a simple and highly effective method, called the Gaussian regularization of the Shannon series, was proposed in the engineering and has received remarkable attention. It works by multiplying the sinc function in the Shannon series with a regularized Gaussian function. Recently, it was proved that the upper error bound of this method can achieve a convergence rate of the order $O(\frac{1}{\sqrt{n}}\exp(-\frac{\pi-\delta}{2}n))$, where $0<\delta<\pi$ is the bandwidth and $n$ the number of sample data.  The convergence rate is by far the best convergence rate among all regularized methods for the Shannon sampling series. The main objective of this article is to present the theoretical justification and numerical verification that the convergence rate is optimal when $0<\delta<\pi/2$ by estimating the lower error bound of the truncated Gaussian regularized Shannon sampling series.

\noindent{\bf Keywords:}  Convergence rate, Gaussian regularization, lower error bounds, oversampling, Shannon's sampling series

\end{abstract}

\section{Introduction}

The classical Shannon sampling theorem \cite{Jerri, Kotelnikov33,Nyquist28, Shannon,Unser,Whittaker} states that any bandlimited function with bandwidth $\pi$ can be completely reconstructed by its infinite samples at integers. In practice, however, we can only sum over finite sample data ``near'' $t$ to approximate the function value at $t$.
Truncating the classical Shannon sampling series \cite{Helms,Jagerman} results in a convergence rate of the order $O(\frac{1}{\sqrt{n}})$ due to the slow decay of the sinc function, where $n$ denotes the number of samples. Moreover, this convergence rate for the truncated Shannon series is optimal in the worst case scenario (see, e.g., Lemma 1.1, \cite{Micchelli}).
A useful way to significantly improve the convergence rate is by oversampling. Led by this idea, three regularization methods \cite{Jagerman,Micchelli,Qian} for Shannon's sampling series were proposed to reconstruct a bandlimited function $f$ with bandwidth $0<\delta<\pi$ from its finite oversampling data $\{f(j):j=-n+1,-n+2,\dots,n\}$ with an exponentially decaying approximation error.  They work by multiplying the sinc function  in the Shannon series with a rapidly-decaying regularization function, namely a power of a sinc function \cite{Jagerman}, a spline function \cite{Micchelli}, or a Gaussian function \cite{LinZhang17,Qian,Qianthesis}.  To be precise and to state the purpose of the paper, the {\it truncated Gaussian regularized Shannon sampling series} proposed by Wei \cite{Wei98} is defined as
\begin{equation}\label{GR}
(S_{n,r}f)(t):=\sum_{j=-n+1}^n f(j)\sinc(t-j)e^{-\frac{(t-j)^2}{2r^2}},\ t\in(0,1),\ f\in\cB_\delta, 
\end{equation}
where $r>0$, $n\ge2$, $0<\delta<\pi$, and $\sinc(x):=\sin(\pi x)/(\pi x)$, $x\in\bR$.  The {\it Paley-Wiener space} $\cB_\delta$ is defined as
$$
\cB_\delta:=\Big\{f\in L^2(\bR)\cap C(\bR): \supp\hat{f}\subseteq [-\delta,\delta]\Big\}
$$
with the norm $\|f\|_{\cB_\delta}:=\|f\|_{L^2(\bR)}=\|\hat{f}\|_{L^2(\bR)}$. Denote by $\supp h$ the support of a function $h$.
In this note, for each $f\in L^1(\bR)$, its {\it Fourier transform} \cite{Debnath15} takes the form:
$$
\hat{f}(\xi):=\frac{1}{\sqrt{2\pi}} \int_{\bR} f(x)e^{-ix\xi }dx,\ \xi\in\bR.
$$
We can extend the Fourier transform to $L^2(\bR)$ by the standard approximation process.

Given a bandlimited function with bandwidth $0<\delta<\pi$, it was proved in \cite{Qian} that  the upper error bound of the truncated Gaussian regularized Shannon sampling series (\ref{GR}) is of the convergence rate of the order $O(\sqrt{n}\exp(-\frac{\pi-\delta}2n))$ after optimizing about the variance $r$ of the Gaussian function as done in (\cite{Micchelli},  pp. 106-107). 
Recently, the paper \cite{LinZhang17} provided a better estimate for the second error term, namely $E_2f$ in (\ref{E1E2eq}) or Equation (2.5) in \cite{Qian},  and hence improved  the convergence rate for (\ref{GR}) to the order
\begin{equation}\label{rate}
O\Big(\frac{1}{\sqrt{n}}\exp(-\frac{\pi-\delta}{2}n)\Big).
\end{equation}
The latter rate (\ref{rate}) is by far the best convergence rate among all regularized methods \cite{Jagerman,LinZhang17,Micchelli,Qian} for the Shannon sampling series. Due to its simplicity and high accuracy, the Gaussian regularized Shannon sampling series has been widely applied to scientific and engineering computations.  We notice that more than a hundred such papers have appeared (see \url{http://www.math.msu.edu/~wei/pub-sec.html} for the list, and \cite{Wei98,Wei00,Wei000,Wei08,Zhao} for comments and discussion). Furthermore, many generalizations of Shannon's sampling theorem have been established (see, e.g., \cite{Aldroubi,Chen15,Chen1,Grochenig,Marks,SongSun07,SunZhou02,Vaidyanathan} and references therein).

Numerical experiments in \cite{LinZhang17} indicated that the convergence rate (\ref{rate}) should be optimal. 
We mathematically certify this indication by estimating the lower error bound of (\ref{GR}). In addition, we are able to justify that the selection of the variance $r=\sqrt{\frac{n-1}{\pi-\delta}}$ in (\ref{GR}) is optimal. We emphasize that it is the first lower error bound estimation for (\ref{GR}) in the literature. More importantly, the lower bound is of exponential decay. 

For any $x>0$, we denote by $\lceil x\rceil$ the smallest integer greater than or equal to $x$. We now are ready to present our main results. 
\begin{theorem}\label{Theorem} Let $0<\delta<\frac{\pi}{2}$ and  $0<\varepsilon<1$. If there exists $\frac{2}{\sqrt{\varepsilon(2+\varepsilon)}(\pi-\delta)}\le r\le \sqrt{\frac{n-1}{\pi-\delta}}$ such that $C_{r,\delta,\varepsilon}>0$, then the lower error bound for the truncated Gaussian regularized Shannon sampling series (\ref{GR}) is
$$
\sup_{\|f\|_{\cB_\delta}\le 1}\sup_{t\in(0,1)}\big|f(t)-(S_{n,r}f)(t)\big|
 \ge  \frac{1}{\pi\sqrt{2\delta}} \Big[C_{r,\delta,\varepsilon} \frac{e^{-\frac{(\pi-\delta)^2r^2}{2}}}{r^3}-\frac{2\sqrt{2} r^2 e^{-\frac{(n-1)^2}{2r^2}}}{n(n+\frac12) (n-1)\sqrt{\pi}}\Big],
 $$
where $n\ge \lceil \frac{4}{\varepsilon(2+\varepsilon)(\pi-\delta)}+1\rceil$ is sufficiently large and
\begin{equation}\label{Crdelta}
C_{r,\delta,\varepsilon}:=\sin\Big(\frac{\delta}{2}\Big)\Big[\frac{4}{(2+\varepsilon)(\pi+\delta) \delta}\Big(\frac{2}{(2+\varepsilon)(\pi-\delta)}-\frac{e^{-2\pi\delta r^2}}{\pi+\delta}\Big) 
-\frac{2}{(2\pi-\delta)(\pi-\delta)^2} \Big].
\end{equation}
\end{theorem}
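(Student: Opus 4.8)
The plan is to bound the supremum from below by exhibiting a single near-extremal input. Writing the error as $f(t)-(S_{n,r}f)(t)=E_1f(t)+E_2f(t)$, where $E_1f$ is the regularization error (the difference between $f$ and the \emph{non}-truncated Gaussian-regularized series) and $E_2f$ is the truncation tail, as in (\ref{E1E2eq}), the triangle inequality gives, for any admissible $f_0$ and any $t_0\in(0,1)$,
$$
\sup_{\|f\|_{\cB_\delta}\le1}\sup_{t\in(0,1)}\big|f(t)-(S_{n,r}f)(t)\big|\ \ge\ \big|E_1f_0(t_0)\big|-\big|E_2f_0(t_0)\big|.
$$
I would take $f_0$ to be the $L^2$-normalized reproducing kernel of $\cB_\delta$ centered at the origin, $f_0(t)=\sqrt{\pi/\delta}\,\frac{\sin(\delta t)}{\pi t}$ (equivalently $\hat f_0(\xi)=\tfrac{1}{\sqrt{2\delta}}\chi_{[-\delta,\delta]}(\xi)$, so that $\|f_0\|_{\cB_\delta}=1$), and evaluate at the half-integer $t_0=\tfrac12$. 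The restriction $\delta<\pi/2$ is precisely what guarantees $\cos(\tfrac{\xi}{2})>0$ throughout $[-\delta,\delta]$, preventing sign cancellation in the oscillatory integral that represents $E_1f_0(\tfrac12)$; this is the structural reason the argument is confined to $0<\delta<\pi/2$.

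For the main (regularization) term I would pass to the frequency side by Poisson summation: since $\supp\hat f_0\subseteq[-\delta,\delta]\subseteq[-\pi,\pi]$,
$$
E_1f_0(\tfrac12)=\frac{1}{\sqrt{2\delta}}\int_{-\delta}^{\delta}\cos(\tfrac{\xi}{2})\Big[\tfrac{1}{\sqrt{2\pi}}-\widehat{g_r}(\xi)\Big]d\xi-\text{(aliasing }k\neq0\text{ terms)},
$$
where $g_r(x)=\sinc(x)e^{-x^2/(2r^2)}$ and $\tfrac{1}{\sqrt{2\pi}}-\widehat{g_r}(\xi)$ is a sum of two scaled Gaussian tail integrals, over $[r(\pi-\xi),\infty)$ and $[r(\pi+\xi),\infty)$. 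The near-edge tail carries the factor $e^{-(\pi-\delta)^2r^2/2}$, the far-edge tail is smaller by $e^{-2\pi\delta r^2}$, and the aliasing terms are of still higher order. The crucial point is that the naive size $r^{-1}e^{-(\pi-\delta)^2r^2/2}$ of each tail \emph{cancels at leading order} once $\cos(\tfrac\xi2)$ is integrated against it: an integration by parts converts the tail into its Gaussian density, and the boundary contribution at $\xi=\delta$ exactly cancels the leading part of the remaining integral, leaving the genuine size $r^{-3}e^{-(\pi-\delta)^2r^2/2}$ together with the factor $\sin(\delta/2)$ produced by $\big[\tfrac{\sin(\tau\xi)}{\tau}\big]$ at $\xi=\delta$ and shift $\tau=\tfrac12$ (the offset between the kernel's center $0$ and $t_0=\tfrac12$). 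Carrying the two-sided tail estimates through this cancellation is what assembles the constant $C_{r,\delta,\varepsilon}$.

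For the truncation term I would bound crudely but honestly: $|f_0(j)|\le \tfrac{1}{\sqrt{\pi\delta}\,|j|}$, $|\sinc(\tfrac12-j)|\le \tfrac{1}{\pi|\tfrac12-j|}$, and the Gaussian factor $e^{-(\tfrac12-j)^2/(2r^2)}$ for $|j|\ge n$ is at most $e^{-(n-1)^2/(2r^2)}$; summing the tails $j\le-n$ and $j\ge n+1$ by comparison with $\int x^{-2}e^{-(x-1/2)^2/(2r^2)}\,dx$ yields the subtracted term of order $r^2 n^{-3}e^{-(n-1)^2/(2r^2)}$. The constraint $r\le\sqrt{(n-1)/(\pi-\delta)}$ keeps this contribution from overwhelming the main term: at the endpoint the two exponentials coincide, $e^{-(\pi-\delta)(n-1)/2}$, and the polynomial prefactor $r^{-3}\sim n^{-3/2}$ of $E_1f_0$ dominates the $r^2n^{-3}\sim n^{-2}$ of $E_2f_0$, so the bracket stays positive once $C_{r,\delta,\varepsilon}>0$.

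The hard part is the explicit lower bound on $|E_1f_0(\tfrac12)|$. Because of the leading-order cancellation above, any lossy estimate collapses the bound to $0$ or flips its sign; one must retain the subleading $r^{-3}$ term with an honest constant. This is exactly where the auxiliary parameter $\varepsilon$ and the lower restriction $r\ge \tfrac{2}{\sqrt{\varepsilon(2+\varepsilon)}(\pi-\delta)}$ enter: they calibrate an elementary two-sided bound for the Gaussian tail (of the form $\tfrac{1}{a}(1-\tfrac{1}{a^2})\le \int_a^\infty e^{-u^2/2}du\,/\,e^{-a^2/2}\le\tfrac1a$ with $a=r(\pi-\delta)$) so that the surviving difference can be bounded below by the three explicit pieces in (\ref{Crdelta}) --- the near-edge gain $\tfrac{2}{(2+\varepsilon)(\pi-\delta)}$, minus the far-edge loss $\tfrac{e^{-2\pi\delta r^2}}{\pi+\delta}$, minus a uniform control $\tfrac{2}{(2\pi-\delta)(\pi-\delta)^2}$ on the post-cancellation remainder and aliasing. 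Verifying that these combine to a positive $C_{r,\delta,\varepsilon}$ on the stated range, and that $n\ge\lceil\tfrac{4}{\varepsilon(2+\varepsilon)(\pi-\delta)}+1\rceil$ makes the admissible interval for $r$ nonempty, completes the argument.
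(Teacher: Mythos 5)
Your architecture matches the paper's: the same splitting $f-S_{n,r}f=E_1f+E_2f$ from (\ref{E1E2eq}), a normalized reproducing kernel as the extremal input, a passage to the frequency side where $1-\widehat{g_r}/\widehat{g_\infty}$ becomes a pair of Gaussian tail integrals, two-sided Mills'-ratio bounds calibrated by $\varepsilon$ and the lower restriction on $r$, and the same crude $O(r^2n^{-3}e^{-(n-1)^2/(2r^2)})$ bound on the truncation tail. However, there is one genuine error at the crux of the argument: you assert that the aliasing ($k\neq 0$) terms are ``of still higher order.'' They are not. On the band $[2\pi-\delta,\,2\pi+\delta]$ the factor $\int_{(\xi-\pi)r/\sqrt2}^{(\xi+\pi)r/\sqrt2}e^{-\tau^2}d\tau$ is of size $r^{-1}e^{-(\pi-\delta)^2r^2/2}$ near the left endpoint and is concentrated in a window of width $O((\pi-\delta)^{-1}r^{-2})$, so its integral over the band is of the \emph{same} order $r^{-3}e^{-(\pi-\delta)^2r^2/2}$ as the main term. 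This is precisely why $C_{r,\delta,\varepsilon}$ in (\ref{Crdelta}) carries the subtracted term $\frac{2}{(2\pi-\delta)(\pi-\delta)^2}$ and why the theorem requires $0<\delta<\pi/2$: positivity of $C_{r,\delta,\varepsilon}$ is a competition between the main band and the first aliasing band. Your stated ``structural reason'' for $\delta<\pi/2$ --- that $\cos(\xi/2)>0$ on $[-\delta,\delta]$ --- is wrong, since that holds for every $\delta<\pi$.

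A second, related problem is that your test pair (kernel centered at $0$, point evaluation at $t_0=\tfrac12$) cannot reproduce the stated constant. The paper centers the kernel at $\tfrac12$, picks $t_0$ so that $|E_1f_0(t_0)|$ exceeds its average over $(0,1)$, and pairs $\widehat{E_1f_0}$ with $\widehat{{\bf 1}_{(0,1)}}$; the phase $e^{-i\xi/2}$ then combines with $\widehat{{\bf 1}_{(0,1)}}$ to give the weight $\frac{\sin(\xi/2)}{\xi/2}$, which is bounded below by $\frac{\sin(\delta/2)}{\delta/2}$ on the main band and decays like $1/\xi$ on the aliasing bands. Your weight is $\cos(\xi/2)$, which is smaller on the main band by the factor $\frac{\delta/2}{\tan(\delta/2)}<1$ and does not decay at all on the aliasing bands, so both your gain and your loss are strictly worse; even if completed, your estimate yields a smaller constant than (\ref{Crdelta}) and a narrower positivity range, hence not the theorem as stated. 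Finally, the ``leading-order cancellation'' you invoke is a misattribution: the paper needs no cancellation at all. It lower-bounds the tail by $\frac{e^{-x^2}}{(2+\varepsilon)x}$ (Lemma~\ref{Lemma1}, valid for $x\ge\sqrt{2/(\varepsilon(2+\varepsilon))}$, which is where the constraint $r\ge\frac{2}{\sqrt{\varepsilon(2+\varepsilon)}(\pi-\delta)}$ comes from) and then integrates the resulting density over $\xi$, obtaining a difference of two tails each estimated again by Mills' ratio; the $r^{-3}$ comes from the $O(r^{-2})$ effective width of the tail in $\xi$, and the estimates are in fact quite lossy without collapsing the bound.
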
 

We should make some comments for Theorem \ref{Theorem} here. To begin with, one sees that $e^{-\frac{(\pi-\delta)^2r^2}{2}}$ is decreasing on $(0,\infty)$ with respect to the variable $r$.  The condition $0<r\le \sqrt{\frac{n-1}{\pi-\delta}}$  guarantees  $e^{-\frac{(\pi-\delta)^2r^2}{2}}\ge e^{-\frac{(n-1)^2}{2r^2}}$. Second, the parameter $0<\varepsilon<1$  can be arbitrarily small and will be formally introduced in (\ref{Millseq2}). 
Third, an easy computation shows that  $\sqrt{\frac{n-1}{\pi-\delta}}\ge \frac{2}{\sqrt{\varepsilon(2+\varepsilon)}(\pi-\delta)}$ implies $n\ge \frac{4}{\varepsilon(2+\varepsilon)(\pi-\delta)}+1$. Finally,  $0<\delta<\frac{\pi}{2}$ in Theorem \ref{Theorem} is a necessary condition such that $C_{r,\delta,\varepsilon}>0$. It is straightforward to see that
$$
0<\frac{4}{(2+\varepsilon)(\pi+\delta) \delta}\frac{2}{(2+\varepsilon)(\pi-\delta)}-\frac{2}{(2\pi-\delta)(\pi-\delta)^2}<\frac{1}{(\pi+\delta) \delta}\frac{2}{\pi-\delta}-\frac{2}{(2\pi-\delta)(\pi-\delta)^2},
$$
namely, $\frac{1}{(\pi+\delta)\delta}>\frac{1}{(2\pi-\delta)(\pi-\delta)}$, which implies $\delta<\frac{\pi}{2}$. 
 
It was proved in \cite{LinZhang17,Micchelli} that the best convergence rate (\ref{rate}) for the upper error bound of (\ref{GR}) can be obtained when $r=\sqrt{\frac{n-1}{\pi-\delta}}$. 
To facilitate a comparison with (\ref{rate}),  we obtain a lower error bound of the order $O\big(\frac{1}{n\sqrt{n}}\exp(-\frac{\pi-\delta}{2}n)\big)$.
\begin{corollary}\label{Corollary} Let $r:=\sqrt{\frac{n-1}{\pi-\delta}}$ in Theorem \ref{Theorem} and $0<\varepsilon<1$.   If  $C_{r,\delta,\varepsilon}>0$, then
\begin{equation}\label{remarkeq1} 
\sup_{\|f\|_{\cB_\delta}\le 1}\sup_{t\in(0,1)}\big|f(t)-(S_{n,r}f)(t)\big|\ge \Big[C_{r,\delta,\varepsilon}(\pi-\delta)\sqrt{\pi-\delta}-\frac{2\sqrt{2}}{(\pi-\delta)\sqrt{\pi}} \frac{(n-1)\sqrt{n-1}}{n(n+\frac12)}\Big]
\frac{e^{-\frac{(\pi-\delta)(n-1)}{2}}}{\pi\sqrt{2\delta}(n-1)\sqrt{n-1}},
\end{equation}
where
\begin{equation}\label{n}
n\ge \max\Big\{2, \Big\lceil \frac{4}{\varepsilon(2+\varepsilon)(\pi-\delta)}+1\Big\rceil, \Big\lceil \frac{8}{\pi C_{r,\delta,\varepsilon}^2(\pi-\delta)^5}-1 \Big\rceil \Big\}.
\end{equation}
\end{corollary}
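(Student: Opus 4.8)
The plan is to derive Corollary \ref{Corollary} as a direct specialization of Theorem \ref{Theorem} at the particular choice $r=\sqrt{\frac{n-1}{\pi-\delta}}$, so the work is essentially algebraic bookkeeping rather than new analysis. First I would substitute this value of $r$ into the bound of Theorem \ref{Theorem} and observe that the two exponential factors coincide: since $r^2=\frac{n-1}{\pi-\delta}$, one has $\frac{(\pi-\delta)^2 r^2}{2}=\frac{(\pi-\delta)(n-1)}{2}$ and simultaneously $\frac{(n-1)^2}{2r^2}=\frac{(\pi-\delta)(n-1)}{2}$, so both $e^{-\frac{(\pi-\delta)^2 r^2}{2}}$ and $e^{-\frac{(n-1)^2}{2r^2}}$ collapse to the single factor $e^{-\frac{(\pi-\delta)(n-1)}{2}}$. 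This already explains the exponential appearing in (\ref{remarkeq1}).

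Next I would convert the remaining powers of $r$ into powers of $n-1$. Using $\frac{1}{r^3}=\frac{(\pi-\delta)^{3/2}}{(n-1)^{3/2}}$ and $r^2=\frac{n-1}{\pi-\delta}$, the first term inside the bracket of Theorem \ref{Theorem} becomes $C_{r,\delta,\varepsilon}\,\frac{(\pi-\delta)\sqrt{\pi-\delta}}{(n-1)\sqrt{n-1}}\,e^{-\frac{(\pi-\delta)(n-1)}{2}}$, while the second term becomes $\frac{2\sqrt2}{(\pi-\delta)\sqrt\pi}\,\frac{1}{n(n+\frac12)}\,e^{-\frac{(\pi-\delta)(n-1)}{2}}$. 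Factoring out the common quantity $\frac{e^{-\frac{(\pi-\delta)(n-1)}{2}}}{\pi\sqrt{2\delta}\,(n-1)\sqrt{n-1}}$ then reproduces (\ref{remarkeq1}) verbatim, the two terms in its bracket being $C_{r,\delta,\varepsilon}(\pi-\delta)\sqrt{\pi-\delta}$ and $\frac{2\sqrt2}{(\pi-\delta)\sqrt\pi}\,\frac{(n-1)\sqrt{n-1}}{n(n+\frac12)}$. I would double-check the arithmetic that matches the powers of $(\pi-\delta)$ against those of $(n-1)$, since that is the only place a slip could creep in. Because the supremum on the left-hand side is unchanged, the inequality of the corollary follows from Theorem \ref{Theorem} the instant the hypotheses are checked.

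It then remains to justify the admissibility conditions in (\ref{n}). The upper constraint $r\le\sqrt{\frac{n-1}{\pi-\delta}}$ of Theorem \ref{Theorem} holds with equality by construction, while the lower constraint $r\ge\frac{2}{\sqrt{\varepsilon(2+\varepsilon)}(\pi-\delta)}$ is, after squaring, equivalent to $n\ge\frac{4}{\varepsilon(2+\varepsilon)(\pi-\delta)}+1$, giving the second entry of the maximum in (\ref{n}); the first entry $n\ge2$ is the standing requirement on (\ref{GR}).

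Finally, to make the displayed lower bound a genuinely positive quantity of the advertised order $O\big(\frac{1}{n\sqrt n}\exp(-\frac{\pi-\delta}{2}n)\big)$, I would show the bracket in (\ref{remarkeq1}) is positive for $n$ large. The elementary inequality $\frac{(n-1)\sqrt{n-1}}{n(n+\frac12)}\le\frac{1}{\sqrt{n+1}}$, verified by squaring and comparing the two polynomials $(n-1)^3(n+1)$ and $n^2(n+\frac12)^2$, bounds the subtracted term, so positivity of the bracket is guaranteed once $\sqrt{n+1}>\frac{2\sqrt2}{C_{r,\delta,\varepsilon}(\pi-\delta)^{5/2}\sqrt\pi}$, i.e. $n>\frac{8}{\pi C_{r,\delta,\varepsilon}^2(\pi-\delta)^5}-1$, which is precisely the third entry of the maximum in (\ref{n}). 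Since $e^{-2\pi\delta r^2}\to0$ as $n\to\infty$, the quantity $C_{r,\delta,\varepsilon}$ stays bounded below by a positive constant, so the bracket tends to $C_{r,\delta,\varepsilon}(\pi-\delta)\sqrt{\pi-\delta}>0$ and the stated order follows. The only mild obstacle is choosing the right elementary estimate for $\frac{(n-1)\sqrt{n-1}}{n(n+\frac12)}$ so that the resulting threshold on $n$ matches the stated $\big\lceil\frac{8}{\pi C_{r,\delta,\varepsilon}^2(\pi-\delta)^5}-1\big\rceil$ exactly; apart from that, the argument is a routine substitution into Theorem \ref{Theorem}.
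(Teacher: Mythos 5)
Your proposal is correct and follows essentially the same route as the paper: a direct substitution of $r=\sqrt{\frac{n-1}{\pi-\delta}}$ into Theorem \ref{Theorem}, conversion of the conditions on $r$ into the first two entries of (\ref{n}), and the elementary bound $\frac{(n-1)\sqrt{n-1}}{n(n+\frac12)}<\frac{1}{\sqrt{n+1}}$ to obtain the third entry guaranteeing positivity of the bracket, exactly as in the remarks following the corollary. The algebra matching powers of $(\pi-\delta)$ and $(n-1)$ checks out, so nothing further is needed.
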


Some remarks should be made for Corollary \ref{Corollary}.
First of all, we have $\frac{(n-1)\sqrt{n-1}}{n(n+\frac12)}<\frac{1}{\sqrt{n+1}}$ for all $n\ge2$. As a result, the condition $n\ge  \frac{8}{\pi C_{r,\delta,\varepsilon}^2(\pi-\delta)^5}-1$ guarantees
$$
C_{r,\delta,\varepsilon}(\pi-\delta)\sqrt{\pi-\delta}-\frac{2\sqrt{2}}{(\pi-\delta)\sqrt{\pi}} \frac{(n-1)\sqrt{n-1}}{n(n+\frac12)}>C_{r,\delta,\varepsilon}(\pi-\delta)\sqrt{\pi-\delta}-\frac{2\sqrt{2}}{(\pi-\delta)\sqrt{\pi}}\frac{1}{\sqrt{n+1}}\ge0.
$$
Secondly, note that $\sqrt{\frac{n-1}{\pi-\delta}}\ge \frac{1}{\sqrt{\pi-\delta}}$ for all $n\ge2$. Clearly, a {\it sufficient condition} for $\delta$ such that $C_{r,\delta,\varepsilon}>0$ is
$$
\frac{4}{(2+\varepsilon)(\pi+\delta) \delta}\Big(\frac{2}{(2+\varepsilon)(\pi-\delta)}-\frac{e^{-\frac{2\pi\delta}{\pi-\delta}}}{\pi+\delta}\Big)-\frac{2}{(2\pi-\delta)(\pi-\delta)^2}>0.
$$
Numerical result shows that $\frac{1}{200}\pi\le \delta\le \frac{49}{100}\pi$ is a sufficient condition such that $C_{r,\delta,\varepsilon}>0$ when $\varepsilon=1/20$, where $C_{r,\delta,\varepsilon}$ is defined by (\ref{Crdelta}). Finally, we should mention that $C_{r,\delta,\varepsilon}<0$ as the bandwidth $\delta\to0$ or $\delta\to\frac{\pi}{2}$.

Now, we are in a position to present the upper error bound for (\ref{GR}). It is worth pointing out that an additional error term was lost in \cite{Qian}.  
The error term $\frac{1}{\sqrt{2\pi}}\|\bE_1\hat{f}\|_{L^1(\bR)}$ instead of $\frac{1}{\sqrt{2\pi}}\|\widehat{E_1f}\|_{L^1(\bR)}$ was actually estimated in (\cite{Qian}, p. 1173), where $\widehat{E_1f}=\bE_1\hat{f}+\bE_2\hat{f}$ (see (\ref{E1E2eq0}) and (\ref{E1bE1bE2})). Hence, the problem amounts to estimating the additional term $\frac{1}{\sqrt{2\pi}}\|\bE_2\hat{f}\|_{L^1(\bR)}$ (see (\ref{upperboundeq1})). 
We will see that the missing term $\bE_2 \hat{f}$ has the same convergence rate as $\bE_1 \hat{f}$ (see (\ref{upperboundeq1}) and (\ref{upperboundeq2})). 
As a result, the new convergence rate for the first error term $E_1f$ in (\ref{E1E2eq}) remains the same as $\bE_1\hat{f}$. 

Precisely speaking, we actually obtain the following upper error bound estimation. 
\begin{theorem}\label{Theorem2} Let $0<\delta<\pi$, $r>0$ and $n\ge2$. Then the upper error bound for the truncated Gaussian regularized Shannon sampling series (\ref{GR}) is
$$
\sup_{\|f\|_{\cB_\delta}\le 1}\sup_{t\in(0,1)}\big|f(t)-(S_{n,r}f)(t)\big|\le \Big[\frac{1+\big(1+\frac{1}{2\pi(3\pi-\delta)r^2}\big)e^{-2\pi(2\pi-\delta)r^2}}{\sqrt{2(\pi-\delta)}r}+\sqrt{2\delta}\Big]\frac{e^{-\frac{(\pi-\delta)^2r^2}{2}}}{\pi (\pi-\delta)r} +\frac{r e^{-\frac{(n-1)^2}{2r^2}}}{\pi n\sqrt{n-1}}.
$$
\end{theorem}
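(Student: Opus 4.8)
The plan is to split the error at a point $t\in(0,1)$ into a \emph{regularization error} and a \emph{truncation error}. Setting
$$
(E_1f)(t):=f(t)-\sum_{j\in\bZ}f(j)\sinc(t-j)e^{-\frac{(t-j)^2}{2r^2}},\qquad
(E_2f)(t):=\Big(\sum_{j\le -n}+\sum_{j\ge n+1}\Big)f(j)\sinc(t-j)e^{-\frac{(t-j)^2}{2r^2}},
$$
one has $f-S_{n,r}f=E_1f+E_2f$. I would bound $\sup_t|E_1f(t)|$ and $\sup_t|E_2f(t)|$ separately, taking $\sup_{\|f\|_{\cB_\delta}\le1}$ only at the very end. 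The three summands in the asserted inequality will come, respectively, from the diagonal part of $E_1f$ (the $\sqrt{2\delta}$ term), the aliasing part of $E_1f$ (the bracket over $\sqrt{2(\pi-\delta)}\,r$), and from $E_2f$ (the last term).

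The truncation error $E_2f$ is the routine part. Since $\{\sinc(\cdot-j)\}_{j\in\bZ}$ is an orthonormal basis of $\cB_\pi\supseteq\cB_\delta$, Parseval gives $\sum_{j\in\bZ}|f(j)|^2=\|f\|_{\cB_\delta}^2\le1$, so Cauchy--Schwarz yields $|(E_2f)(t)|\le(\sum_{|j|\ge n}w_j(t)^2)^{1/2}$ with $w_j(t):=|\sinc(t-j)|\,e^{-(t-j)^2/(2r^2)}$. Using $|\sinc(t-j)|\le 1/(\pi|t-j|)$ together with $|t-j|>n$ on the retained tail, and comparing $\sum_{j\ge n+1}e^{-(t-j)^2/r^2}$ with the Gaussian integral $\int_{n-1}^{\infty}e^{-y^2/r^2}\,dy\le \frac{r^2}{2(n-1)}e^{-(n-1)^2/r^2}$, the two tails combine to $\frac{r}{\pi n\sqrt{n-1}}e^{-(n-1)^2/(2r^2)}$.

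For $E_1f$ I would pass to the Fourier side. Because $\supp\hat f\subseteq[-\delta,\delta]$ with $\delta<\pi$, Poisson summation applied to $g_r(x):=\sinc(x)e^{-x^2/(2r^2)}$ gives
$$
\widehat{E_1f}(\xi)=\hat f(\xi)\big(1-\sqrt{2\pi}\,\hat g_r(\xi)\big)-\sqrt{2\pi}\,\hat g_r(\xi)\sum_{k\ne0}\hat f(\xi+2\pi k)=:\bE_1\hat f(\xi)+\bE_2\hat f(\xi).
$$
The supports of the summands ($[-\delta,\delta]$ and its $2\pi k$-translates) are pairwise disjoint, so $\|\widehat{E_1f}\|_{L^1}=\|\bE_1\hat f\|_{L^1}+\|\bE_2\hat f\|_{L^1}$ and $\sup_t|E_1f(t)|\le\frac{1}{\sqrt{2\pi}}(\|\bE_1\hat f\|_{L^1}+\|\bE_2\hat f\|_{L^1})$. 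Everything then rests on the closed form
$$
\sqrt{2\pi}\,\hat g_r(\xi)=1-Q\big(r(\pi-\xi)\big)-Q\big(r(\pi+\xi)\big),\qquad Q(x):=\tfrac{1}{\sqrt{2\pi}}\int_x^\infty e^{-u^2/2}\,du,
$$
so that the diagonal multiplier is $Q(r(\pi-\xi))+Q(r(\pi+\xi))$ on $[-\delta,\delta]$ and, after the shift $\eta=\xi+2\pi k$, the $k=\mp1$ aliasing multipliers become $Q(r(\pi\pm\eta))-Q(r(3\pi\pm\eta))$. I would then invoke the Mills-ratio inequality $Q(x)\le \frac{1}{x\sqrt{2\pi}}e^{-x^2/2}$: bounding the diagonal multiplier pointwise by its maximum $2Q(r(\pi-\delta))$ and using $\int_{-\delta}^\delta|\hat f|\le\sqrt{2\delta}\,\|\hat f\|_{L^2}$ produces the $\sqrt{2\delta}$ term, while Cauchy--Schwarz applied to the aliasing multipliers against $\|\hat f\|_{L^2}\le1$ produces, through $\big(\int_{\pi-\delta}^{\pi+\delta}Q(rs)^2\,ds\big)^{1/2}$, the leading $1/(\sqrt{2(\pi-\delta)}\,r)$ factor and, through the secondary tails $Q(r(3\pi\pm\eta))$ (relaxed by $(A-B)^2\le A^2+B^2$), the correction factor $\big(1+\frac{1}{2\pi(3\pi-\delta)r^2}\big)e^{-2\pi(2\pi-\delta)r^2}$, after absorbing the harmless ratio $((\pi-\delta)/(3\pi-\delta))^{3/2}\le1$.

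The main obstacle is precisely this sharp constant bookkeeping: all three contributions decay like $e^{-(\pi-\delta)^2r^2/2}$, so the theorem is only meaningful if the polynomial prefactors and the exact exponent $2\pi(2\pi-\delta)=\tfrac12[(3\pi-\delta)^2-(\pi-\delta)^2]$ in the correction are tracked with no slack. This forces the \emph{upper} Mills bound and the one-sided estimate $\int_a^\infty e^{-r^2s^2}\,ds\le\frac{1}{2r^2a}e^{-r^2a^2}$ to be used with matched constants, and it requires checking that the full aliasing sum over $|k|\ge1$ is dominated by $k=\pm1$ (the $|k|\ge2$ multipliers carry $e^{-r^2(3\pi-\delta)^2/2}$ or smaller and are absorbed into the same correction). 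Once the three bounds are assembled and $\sup_{\|f\|_{\cB_\delta}\le1}\sup_{t\in(0,1)}$ is taken, factoring out the common $\frac{e^{-(\pi-\delta)^2r^2/2}}{\pi(\pi-\delta)r}$ from the two regularization contributions yields exactly the stated bracket, plus the truncation term.
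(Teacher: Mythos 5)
Your proposal follows essentially the same route as the paper: the same $E_1/E_2$ decomposition, the same Fourier-side splitting into $\bE_1\hat f$ and $\bE_2\hat f$, and the same Riemann--Lebesgue, Cauchy--Schwarz and Mills'-ratio machinery; the only real difference is that you derive in place the two estimates (for $\bE_1\hat f$ and for the truncation term $E_2f$) that the paper simply imports from \cite{LinZhang17}, and your derivations reproduce those constants. One small bookkeeping caution: in the paper the correction factor $\big(1+\frac{1}{2\pi(3\pi-\delta)r^2}\big)e^{-2\pi(2\pi-\delta)r^2}$ comes entirely from the $|k|\ge2$ aliasing bands, the secondary tail $Q(r(3\pi\pm\eta))$ of the $k=\pm1$ multiplier being nonnegative and simply dropped, so your relaxation $(A-B)^2\le A^2+B^2$ is unnecessary and, if retained, would slightly inflate the constant rather than land exactly on the stated bracket.
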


We should again discuss the special case: $r:=\sqrt{\frac{n-1}{\pi-\delta}}$. In this case, note that $r^2=\frac{n-1}{\pi-\delta}\ge \frac{1}{\pi-\delta}$ for all $n\ge2$ and
$$
1+\Big(1+\frac{1}{2\pi(3\pi-\delta)r^2}\Big)e^{-2\pi(2\pi-\delta)r^2}\le 1+\Big(1+\frac{\pi-\delta}{2\pi(3\pi-\delta)}\Big)e^{-\frac{2\pi(2\pi-\delta)}{\pi-\delta}}\le  1+(1+\frac{1}{6\pi})e^{-4\pi}.
$$
\begin{corollary}  Let $r=\sqrt{\frac{n-1}{\pi-\delta}}$ in Theorem \ref{Theorem2}. Then
\begin{equation}\label{remarkeq2}
\sup_{\|f\|_{\cB_\delta}\le 1}\sup_{t\in(0,1)}\big|f(t)-(S_{n,r}f)(t)\big|\le\Big(\sqrt{2\delta}+\frac{\sqrt{n-1}}{n}+\frac{1+(1+\frac{1}{6\pi})e^{-4\pi}}{\sqrt{2(n-1)}}\Big)\frac{e^{-\frac{(\pi-\delta)(n-1)}{2}}}{\pi\sqrt{(\pi-\delta)(n-1)}}.
\end{equation}
\end{corollary}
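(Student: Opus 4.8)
The plan is to specialize the bound of Theorem \ref{Theorem2} by the direct substitution $r=\sqrt{\frac{n-1}{\pi-\delta}}$, so that the proof reduces to algebraic bookkeeping together with two elementary monotonicity estimates. First I would record that with this choice we have $r^2=\frac{n-1}{\pi-\delta}$, whence both exponents collapse to the same quantity:
$$
\frac{(\pi-\delta)^2r^2}{2}=\frac{(\pi-\delta)(n-1)}{2}=\frac{(n-1)^2}{2r^2}.
$$
Consequently each of the two exponential factors appearing in Theorem \ref{Theorem2} equals $e^{-\frac{(\pi-\delta)(n-1)}{2}}$, which is exactly the common exponential factor in (\ref{remarkeq2}).

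Next I would simplify the algebraic prefactors. A short computation gives $\sqrt{2(\pi-\delta)}\,r=\sqrt{2(n-1)}$ and $(\pi-\delta)r=\sqrt{(\pi-\delta)(n-1)}$, so the leading factor $\frac{e^{-(\pi-\delta)^2r^2/2}}{\pi(\pi-\delta)r}$ becomes precisely $\frac{e^{-(\pi-\delta)(n-1)/2}}{\pi\sqrt{(\pi-\delta)(n-1)}}$, the common factor in (\ref{remarkeq2}). For the truncation term I would combine $r=\sqrt{\frac{n-1}{\pi-\delta}}$ with the factor $\frac{1}{\sqrt{n-1}}$ to obtain
$$
\frac{r\,e^{-(n-1)^2/(2r^2)}}{\pi n\sqrt{n-1}}=\frac{\sqrt{n-1}}{n}\cdot\frac{e^{-(\pi-\delta)(n-1)/2}}{\pi\sqrt{(\pi-\delta)(n-1)}},
$$
which contributes the summand $\frac{\sqrt{n-1}}{n}$ inside the parentheses of (\ref{remarkeq2}).

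The only step requiring a genuine (if elementary) estimate is the numerator of the first term, and here I would invoke the two monotonicity inequalities already displayed in the discussion preceding the corollary. Since $n\ge2$ forces $r^2\ge\frac{1}{\pi-\delta}$, equivalently $r^2(\pi-\delta)\ge1$, one has $\frac{1}{2\pi(3\pi-\delta)r^2}\le\frac{\pi-\delta}{2\pi(3\pi-\delta)}\le\frac{1}{6\pi}$ (the last inequality reducing to $-2\delta\le 0$) and $e^{-2\pi(2\pi-\delta)r^2}\le e^{-2\pi(2\pi-\delta)/(\pi-\delta)}\le e^{-4\pi}$ (the last using $\frac{2\pi-\delta}{\pi-\delta}\ge2$, i.e.\ $\delta\ge0$). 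These give $1+\bigl(1+\frac{1}{2\pi(3\pi-\delta)r^2}\bigr)e^{-2\pi(2\pi-\delta)r^2}\le 1+(1+\frac{1}{6\pi})e^{-4\pi}$, and dividing by $\sqrt{2(\pi-\delta)}r=\sqrt{2(n-1)}$ yields the summand $\frac{1+(1+\frac{1}{6\pi})e^{-4\pi}}{\sqrt{2(n-1)}}$. Factoring the common term $\frac{e^{-(\pi-\delta)(n-1)/2}}{\pi\sqrt{(\pi-\delta)(n-1)}}$ out of all three contributions then gives (\ref{remarkeq2}). I do not anticipate a real obstacle: the argument is pure substitution, and the only thing one must verify is that the two numerator estimates hold uniformly for $n\ge2$ and $0<\delta<\pi$, which the stated monotonicity in $r$ guarantees.
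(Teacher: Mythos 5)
Your proposal is correct and follows essentially the same route as the paper: the author likewise substitutes $r=\sqrt{\tfrac{n-1}{\pi-\delta}}$ into Theorem \ref{Theorem2}, notes $r^2\ge\tfrac{1}{\pi-\delta}$ for $n\ge2$, and bounds the bracketed numerator by $1+(1+\tfrac{1}{6\pi})e^{-4\pi}$ via exactly the two monotonicity estimates you give, the rest being the same algebraic simplification of the exponents and prefactors.
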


By Theorems \ref{Theorem} and \ref{Theorem2}, one sees that the convergence rate of the order $O\big(\frac{1}{r}\exp(-\frac{(\pi-\delta)^2r^2}{2})\big)$ for the truncated Gaussian regularized Shannon sampling series is {\it optimal} when $\frac{2}{\sqrt{\varepsilon(2+\varepsilon)}(\pi-\delta)}\le r\le \sqrt{\frac{n-1}{\pi-\delta}}$ and $C_{r,\delta,\varepsilon}>0$. Recall that $e^{-\frac{(\pi-\delta)^2r^2}{2}}$ is decreasing on $(0,\infty)$. Thus, when $r= \sqrt{\frac{n-1}{\pi-\delta}}$ and $C_{r,\delta,\varepsilon}>0$, by (\ref{remarkeq1}) and (\ref{remarkeq2}), we conclude that the convergence rate (\ref{rate}) is optimal. In other words,  it turns out that the selection $r=\sqrt{\frac{n-1}{\pi-\delta}}$ is optimal in Theorem \ref{Theorem2} among all $\frac{2}{\sqrt{\varepsilon(2+\varepsilon)}(\pi-\delta)}\le r\le \sqrt{\frac{n-1}{\pi-\delta}}$, where $0<\varepsilon<1$ and $n$ is given by (\ref{n}).

The paper is organized as follows.
We shall exploit Fourier analysis techniques to prove Theorems \ref{Theorem} and \ref{Theorem2} in the following section. In the last section, numerical experiments are presented to demonstrate our theoretical results. Specifically, the lower error bound, the reconstruction error, and the upper error bound of the truncated Gaussian regularized Shannon sampling series (\ref{GR}) will be directly compared.

\section{Proof of Theorems \ref{Theorem} and \ref{Theorem2}}

In this section, we are devoted to proving Theorems \ref{Theorem} and \ref{Theorem2}. Assume $0<\delta<\pi$, $0<\varepsilon<1$, and $r>0$. For any subset $E$ of $\bR$, denote by ${\bf 1}_{E}$ the characteristic function on $E$, namely,  ${\bf 1}_{E}(x)=1$ if $x\in E$ and $0$ otherwise. Denote by $\bZ$ the set of all integers and $\bN$ the set of all positive integers.

We first deal with Theorem \ref{Theorem}. To do so, we divide the error $f-S_{n,r}f$ into two terms. Specifically, 
for every $f\in\cB_\delta$, $0<\delta<\pi$, we set
\begin{equation}\label{E1E2eq} 
f(t)-(S_{n,r}f)(t):=(E_1 f)(t)+(E_2f)(t),\ t\in(0,1),
\end{equation}
where the first error term $E_1f$ and the second error term $E_2f$ are defined by
\begin{equation}\label{E1E2eq0} 
(E_1f)(t):=f(t)-\sum_{j\in\bZ} f(j)\sinc(t-j)e^{-\frac{(t-j)^2}{2r^2}}\mbox{ and } (E_2f)(t):=\sum_{j\notin(-n,n]} f(j)\sinc(t-j)e^{-\frac{(t-j)^2}{2r^2}},
\end{equation}
respectively. It follows that
\begin{equation}\label{E1E2} 
\sup_{\|f\|_{\cB_\delta}\le 1}\sup_{t\in(0,1)}\big|f(t)-(S_{n,r}f)(t)\big|\ge \big|(E_1 f_0)(t_0)+(E_2f_0)(t_0)\big|\ge |(E_1 f_0)(t_0)|-|(E_2f_0)(t_0)|,
\end{equation}
where $f_0\in\cB_\delta$ with $\|f_0\|_{\cB_\delta}\le 1$ and the point $t_0\in(0,1)$  is appropriately chosen. Throughout this note, we always take
\begin{equation}\label{f0}
f_0(t):=\frac{1}{\sqrt{\pi\delta}}\frac{\sin((t-\frac12)\delta)}{(t-\frac12)}, \ t\in\bR,
\end{equation}
and $t_0\in (0,1)$ in (\ref{E1E2}) such that the function value of the function $|E_1 f_0|$  at $t_0$ is not less than its average on $(0,1)$, that is,  
\begin{equation}\label{t0} 
|(E_1 f_0)(t_0)|\ge \|(E_1 f_0)(t)\|_{L^1((0,1))}.
\end{equation}
Clearly, $f_0\in\cB_\delta$, $\|f_0\|_{\cB_\delta}=1$, and  its Fourier transform is $\widehat{f_0}(\xi)=\frac{1}{\sqrt{2\delta}}{\bf 1}_{[-\delta,\delta]}(\xi) e^{-i \xi/2}$, $\xi\in\bR$.

Therefore, our task reduces to give a ``big'' lower bound for $|(E_1 f_0)(t_0)|$ and  a ``small'' upper bound for $|(E_2 f_0)(t_0)|$, where $|(E_1 f_0)(t_0)|$ and $|(E_2 f_0)(t_0)|$ are defined by (\ref{E1E2}).

For the sake of clarity, two lemmas are needed. 
\begin{lemma}\label{Lemma1} Let $0<\delta<\pi$  and $0<\varepsilon<1$.  If there exists $r\ge \frac{2}{\sqrt{\varepsilon(2+\varepsilon)}(\pi-\delta)}$ such that $\frac{2}{(2+\varepsilon)(\pi-\delta)}-\frac{e^{-2\pi\delta r^2}}{\pi+\delta}>0$, then 
$$
\int_{-\delta}^{\delta} \Big(1-\frac{1}{\sqrt{\pi}}\int_{\frac{(\xi-\pi)r}{\sqrt{2}}}^{\frac{(\xi+\pi)r}{\sqrt{2}}} e^{-\tau^2}d\tau\Big)d\xi
>\frac{2\sqrt{2}}{(2+\varepsilon)(\pi+\delta) \sqrt{\pi}}\Big(\frac{2}{(2+\varepsilon)(\pi-\delta)}-\frac{e^{-2\pi\delta r^2}}{\pi+\delta}\Big)\frac{e^{-\frac{(\pi-\delta)^2r^2}{2}}}{r^3}.
$$
\end{lemma}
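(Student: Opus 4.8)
The plan is to read the integrand as a pair of complementary Gaussian tails and then to extract its genuine leading term by an integration by parts. First I would write $1=\frac{1}{\sqrt{\pi}}\int_{\bR}e^{-\tau^2}\,d\tau$ and split the finite integral, so that for each $\xi\in[-\delta,\delta]$,
\[
1-\frac{1}{\sqrt{\pi}}\int_{(\xi-\pi)r/\sqrt{2}}^{(\xi+\pi)r/\sqrt{2}}e^{-\tau^2}\,d\tau
=\frac{1}{\sqrt{\pi}}\Big(\int_{(\pi-\xi)r/\sqrt{2}}^{\infty}e^{-\tau^2}\,d\tau+\int_{(\pi+\xi)r/\sqrt{2}}^{\infty}e^{-\tau^2}\,d\tau\Big),
\]
where $0<\delta<\pi$ keeps both lower limits positive and $e^{-\tau^2}$ is even. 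The right-hand side is even in $\xi$, and the substitution $\xi\mapsto-\xi$ shows the two tails contribute equally over $[-\delta,\delta]$; hence, after setting $s=\pi-\xi$,
\[
\int_{-\delta}^{\delta}\Big(1-\tfrac{1}{\sqrt{\pi}}\int_{(\xi-\pi)r/\sqrt{2}}^{(\xi+\pi)r/\sqrt{2}}e^{-\tau^2}\,d\tau\Big)d\xi
=\frac{2}{\sqrt{\pi}}\int_{\pi-\delta}^{\pi+\delta}\Big(\int_{sr/\sqrt{2}}^{\infty}e^{-\tau^2}\,d\tau\Big)ds .
\]

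Next I would insert a sharp lower Mills-ratio bound (Gordon's inequality) $\int_{y}^{\infty}e^{-\tau^2}\,d\tau\ge \frac{y\,e^{-y^2}}{2y^2+1}$ with $y=sr/\sqrt{2}$, which converts the double integral into the lower bound $\frac{\sqrt{2}\,r}{\sqrt{\pi}}\int_{\pi-\delta}^{\pi+\delta}\frac{s\,e^{-s^2r^2/2}}{s^2r^2+1}\,ds$. The substitution $u=s^2r^2$ then reduces this to $\frac{1}{\sqrt{2\pi}\,r}\int_{P}^{Q}\frac{e^{-u/2}}{u+1}\,du$ with $P=(\pi-\delta)^2r^2$ and $Q=(\pi+\delta)^2r^2$. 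Integrating by parts and bounding the remainder $\int_{P}^{Q}e^{-u/2}(u+1)^{-2}\,du\le 2e^{-P/2}(P+1)^{-2}$ produces
\[
\int_{P}^{Q}\frac{e^{-u/2}}{u+1}\,du\ \ge\ \frac{2e^{-P/2}}{P+1}-\frac{4e^{-P/2}}{(P+1)^2}-\frac{2e^{-Q/2}}{Q+1},
\]
in which $e^{-P/2}=e^{-(\pi-\delta)^2r^2/2}$ is the sought leading exponential and $e^{-Q/2}=e^{-(\pi-\delta)^2r^2/2}e^{-2\pi\delta r^2}$ reproduces exactly the $e^{-2\pi\delta r^2}$ weight appearing on the right-hand side of the lemma.

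Finally I would invoke the hypothesis. The assumption $r\ge \frac{2}{\sqrt{\varepsilon(2+\varepsilon)}(\pi-\delta)}$ is equivalent to $P=(\pi-\delta)^2r^2\ge \frac{4}{\varepsilon(2+\varepsilon)}$, whence $1+\frac{1}{P}\le 1+\frac{\varepsilon(2+\varepsilon)}{4}\le \frac{(2+\varepsilon)^2}{4}$, giving the crucial estimate $\frac{1}{P+1}\ge \frac{4}{(2+\varepsilon)^2P}$; this is precisely where the factors $(2+\varepsilon)$ in the statement originate. Substituting this together with $P=(\pi-\delta)^2r^2$ and $Q=(\pi+\delta)^2r^2$ and restoring the prefactor $\frac{2}{\sqrt{\pi}}$, the claim reduces to an elementary comparison in which the positive main term, of size $\frac{4\sqrt{2}}{\sqrt{\pi}(2+\varepsilon)^2(\pi-\delta)^2}\frac{e^{-(\pi-\delta)^2r^2/2}}{r^3}$, exceeds the target coefficient $\frac{4\sqrt{2}}{\sqrt{\pi}(2+\varepsilon)^2(\pi-\delta)(\pi+\delta)}$ by the slack factor $\frac{\pi+\delta}{\pi-\delta}$; this surplus, of size $\frac{8\delta}{(2+\varepsilon)^2(\pi-\delta)^2(\pi+\delta)}e^{-(\pi-\delta)^2r^2/2}$, must be shown to dominate the two negative corrections (the $O(r^{-2})$ remainder and the $e^{-2\pi\delta r^2}$ boundary term), and it is exactly here that $0<\varepsilon<1$ and the positivity hypothesis $\frac{2}{(2+\varepsilon)(\pi-\delta)}>\frac{e^{-2\pi\delta r^2}}{\pi+\delta}$ are consumed.

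The hard part, I expect, is that the true size of the left-hand side lies a full factor $r^{-2}$ below the naive $e^{-(\pi-\delta)^2r^2/2}$: the leading contributions of the two Gaussian tails cancel, so any crude tail estimate annihilates the main term entirely. The integration by parts that isolates the genuine boundary contribution $\frac{e^{-(\pi-\delta)^2r^2/2}}{(\pi-\delta)^2r^2}$, together with the tight control of the remainder integral so that it does not swallow this contribution, is the delicate heart of the argument; the remaining constant-tracking is merely calibrated so that the threshold $(\pi-\delta)^2r^2\ge \frac{4}{\varepsilon(2+\varepsilon)}$ yields the factor $\frac{4}{(2+\varepsilon)^2}$ on the nose.
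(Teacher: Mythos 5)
Your reduction of the left-hand side to $\frac{2}{\sqrt{\pi}}\int_{\pi-\delta}^{\pi+\delta}\big(\int_{sr/\sqrt{2}}^{\infty}e^{-\tau^2}d\tau\big)ds$, the application of Gordon's bound, the substitution $u=s^2r^2$, and the integration by parts are all correct, and they produce a valid (indeed asymptotically sharp) intermediate lower bound $\frac{1}{\sqrt{2\pi}\,r}\big[\frac{2e^{-P/2}}{P+1}-\frac{4e^{-P/2}}{(P+1)^2}-\frac{2e^{-Q/2}}{Q+1}\big]$. The gap is in the final comparison, and it is genuine. After you degrade $\frac{1}{P+1}$ to $\frac{4}{(2+\varepsilon)^2P}$, the only surplus left over the target's main term is the factor $\frac{\pi+\delta}{\pi-\delta}$, i.e.\ an additive margin proportional to $\delta$ (relative size $\frac{2\delta}{\pi+\delta}$). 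But the integration-by-parts remainder costs, relative to the main term, a factor $\frac{2}{P+1}$, and at the smallest admissible $r$ the hypothesis only gives $P=(\pi-\delta)^2r^2\ge\frac{4}{\varepsilon(2+\varepsilon)}$, so this cost is of order $\varepsilon$. Hence your comparison requires roughly $\frac{2\delta}{\pi+\delta}\gtrsim\varepsilon$, which the lemma does not assume: for instance $\varepsilon=\frac12$, $\delta=\frac{\pi}{10}$, $r=\frac{2}{\sqrt{\varepsilon(2+\varepsilon)}(\pi-\delta)}$ satisfies every hypothesis of the lemma (the positivity condition holds), yet the surplus $\approx 0.12\cdot(\text{main term})$ is far smaller than the remainder $\approx 0.6\cdot(\text{main term})$, so your chain of inequalities does not close. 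Even without degrading the main term at all (writing $\frac{2}{P+1}-\frac{4}{(P+1)^2}=\frac{2(P-1)}{(P+1)^2}$), the comparison still fails as $\delta\to0$ with $P$ at its minimum. There is also a smaller unaddressed deficit in the boundary term: your $-\frac{2e^{-Q/2}}{Q+1}$ is more negative than the target's $-\frac{4e^{-Q/2}}{(2+\varepsilon)Q}$ whenever $Q>2/\varepsilon$, which happens for large $r$.

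The paper's proof avoids producing any separate remainder to absorb: it applies the one-sided Mills bound $\int_x^{\infty}e^{-\tau^2}d\tau>\frac{e^{-x^2}}{(2+\varepsilon)x}$ (valid for $x\ge\sqrt{2/(\varepsilon(2+\varepsilon))}$, which is exactly what $r\ge\frac{2}{\sqrt{\varepsilon(2+\varepsilon)}(\pi-\delta)}$ guarantees) directly to each tail, so the entire $O(1/P)$ loss is pre-paid by the $(2+\varepsilon)$ factors already built into the target constant; it then lower-bounds $\int_{-\delta}^{\delta}\frac{e^{-(\pi+\xi)^2r^2/2}}{\pi+\xi}d\xi$ by pulling out $\frac{1}{\pi+\delta}$ and writing the remaining Gaussian integral as a difference of two tails, handled by the same Mills bounds. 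Every step is a clean one-directional inequality with no error term left to dominate, which is why the result holds for all $0<\delta<\pi$. A separate, minor point: your heuristic that ``the leading contributions of the two Gaussian tails cancel'' is not right --- the two tails are both positive and add; the extra factor $r^{-2}$ beyond the naive $e^{-P/2}/r$ comes from integrating the Gaussian decay in $\xi$ over $[-\delta,\delta]$, not from any cancellation.
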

\begin{proof} By Mills' ratio \cite{Pollak}
\begin{equation}\label{Millseq}
\frac{e^{-x^2}}{x+\sqrt{x^2+2}}\le \int_x^{\infty} e^{-\tau^2}d\tau\le \frac{e^{-x^2}}{x+\sqrt{x^2+\frac{4}{\pi}}},\ x>0,
\end{equation}
we obtain
\begin{equation}\label{Millseq1}
\int_x^{\infty} e^{-\tau^2}d\tau<\frac{e^{-x^2}}{2x},\ x>0.
\end{equation}
 By (\ref{Millseq}), for any $0<\varepsilon<1$, we have
\begin{equation}\label{Millseq2}
\frac{e^{-x^2}}{(2+\varepsilon)x}< \int_x^{\infty} e^{-\tau^2}d\tau\mbox{ for all }x\ge\sqrt{\frac{2}{\varepsilon(2+\varepsilon)}}.
\end{equation}
Clearly,  the parameter $\varepsilon$ serves to improve the accuracy of the inequality (\ref{Millseq2}).
Observe that
$$
\begin{array}{ll}
\displaystyle{1-\frac{1}{\sqrt{\pi}}\int_{\frac{(\xi-\pi)r}{\sqrt{2}}}^{\frac{(\xi+\pi)r}{\sqrt{2}}} e^{-\tau^2}d\tau }
&\displaystyle{=\frac{1}{\sqrt{\pi}}\int_{\bR} e^{-\tau^2}d\tau-\frac{1}{\sqrt{\pi}}\int_{\frac{(\xi-\pi)r}{\sqrt{2}}}^{\frac{(\xi+\pi)r}{\sqrt{2}}} e^{-\tau^2}d\tau }\\
&\displaystyle{= \frac{1}{\sqrt{\pi}} \Big[\int_{\frac{(\xi+\pi)r}{\sqrt{2}}}^{\infty} e^{-\tau^2}d\tau + \int^{\frac{(\xi-\pi)r}{\sqrt{2}}}_{-\infty} e^{-\tau^2}d\tau \Big]}\\
&\displaystyle{= \frac{1}{\sqrt{\pi}} \Big[\int_{\frac{(\xi+\pi)r}{\sqrt{2}}}^{\infty} e^{-\tau^2}d\tau + \int_{\frac{(\pi-\xi)r}{\sqrt{2}}}^{\infty} e^{-\tau^2}d\tau \Big].}
\end{array}
$$
By (\ref{Millseq2}), for any $\xi\in[-\delta,\delta]$  and $r\ge \frac{2}{\sqrt{\varepsilon(2+\varepsilon)}(\pi-\delta)}$ (i.e., $\frac{(\pi-\delta)r}{\sqrt{2}}\ge\sqrt{\frac{2}{\varepsilon(2+\varepsilon)}}$), we have
$$
1-\frac{1}{\sqrt{\pi}}\int_{\frac{(\xi-\pi)r}{\sqrt{2}}}^{\frac{(\xi+\pi)r}{\sqrt{2}}} e^{-\tau^2}d\tau 
> \frac{1}{(2+\varepsilon)\sqrt{\pi}} \Big[\frac{e^{-\frac{(\pi+\xi)^2r^2}{2}}}{(\pi+\xi)r/\sqrt{2}}+\frac{e^{-\frac{(\pi-\xi)^2r^2}{2}}}{(\pi-\xi)r/\sqrt{2}}\Big]
:=\frac{\sqrt{2}}{(2+\varepsilon) r\sqrt{\pi}}G(\xi),
$$
where
$$
G(\xi):=\frac{e^{-\frac{(\pi+\xi)^2r^2}{2}}}{\pi+\xi}+\frac{e^{-\frac{(\pi-\xi)^2r^2}{2}}}{\pi-\xi},\ \xi\in[-\delta,\delta].
$$
We compute
$$
\begin{array}{ll}
\displaystyle{ \int_{-\delta}^{\delta} G(\xi)d\xi=2\int_{-\delta}^{\delta}\frac{e^{-\frac{(\pi+\xi)^2r^2}{2}}}{\pi+\xi}d\xi }
&\displaystyle{ >\frac{2}{\pi+\delta}\int_{-\delta}^{\delta}e^{-\frac{(\pi+\xi)^2r^2}{2}}d\xi }\\
&\displaystyle{=\frac{2\sqrt{2}}{(\pi+\delta)r}\int_{\frac{(\pi-\delta)r}{\sqrt{2}}}^{\frac{(\pi+\delta)r}{\sqrt{2}}} e^{-\tau^2}d\tau }\\
&\displaystyle{=\frac{2\sqrt{2}}{(\pi+\delta)r}\Big[\int_{\frac{(\pi-\delta)r}{\sqrt{2}}}^{\infty} e^{-\tau^2}d\tau-\int_{\frac{(\pi+\delta)r}{\sqrt{2}}}^{\infty} e^{-\tau^2}d\tau\Big]. }\\
\end{array}
$$
Combining (\ref{Millseq1}) with (\ref{Millseq2}), we obtain
$$
\begin{array}{ll}
\displaystyle{\int_{-\delta}^{\delta} G(\xi)d\xi}
&\displaystyle{>\frac{2\sqrt{2}}{(\pi+\delta)r}\Big(\frac{\sqrt{2}}{(2+\varepsilon)(\pi-\delta)r}e^{-\frac{(\pi-\delta)^2r^2}{2}}-\frac{1}{\sqrt{2}(\pi+\delta)r}e^{-\frac{(\pi+\delta)^2r^2}{2}}\Big) }\\
&\displaystyle{=\frac{2}{(\pi+\delta)r^2}\Big(\frac{2}{(2+\varepsilon)(\pi-\delta)}-\frac{e^{-2\pi\delta r^2}}{\pi+\delta}\Big)e^{-\frac{(\pi-\delta)^2r^2}{2}}.}\\
\end{array}
$$
The proof is hence complete.
\end{proof}

\begin{lemma}\label{lemma2} Let $0<\delta<\pi$ and $r>0$. It holds
$$
\sum_{k\in\bN}\int_{-\delta+2k\pi}^{\delta+2k\pi} e^{-\frac{(\xi-\pi)^2r^2}{2}}  d\xi<\frac{e^{-\frac{(\pi-\delta)^2r^2}{2}}}{(\pi-\delta)r^2}.
$$
\end{lemma}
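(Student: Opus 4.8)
The plan is to replace the sum of integrals by a single tail integral and then invoke the Mills-ratio estimate (\ref{Millseq1}). The starting observation is that for $k\in\bN$ the intervals $I_k:=[-\delta+2k\pi,\,\delta+2k\pi]$ all lie in the half-line $[2\pi-\delta,\infty)$, since their left endpoints satisfy $-\delta+2k\pi\ge 2\pi-\delta$. Moreover, because $0<\delta<\pi$, consecutive intervals are separated by a gap of length $(-\delta+2(k+1)\pi)-(\delta+2k\pi)=2(\pi-\delta)>0$, so the $I_k$ are pairwise disjoint. As the integrand $e^{-(\xi-\pi)^2r^2/2}$ is strictly positive, I would therefore bound
$$
\sum_{k\in\bN}\int_{-\delta+2k\pi}^{\delta+2k\pi} e^{-\frac{(\xi-\pi)^2r^2}{2}}\,d\xi=\int_{\bigcup_{k\in\bN}I_k} e^{-\frac{(\xi-\pi)^2r^2}{2}}\,d\xi<\int_{2\pi-\delta}^{\infty} e^{-\frac{(\xi-\pi)^2r^2}{2}}\,d\xi,
$$
where the strict inequality comes from the positive-length gaps between the $I_k$, on which the integrand contributes positively to the full half-line integral but not to the sum.

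Next I would evaluate the right-hand tail integral by the change of variables $\tau=(\xi-\pi)r/\sqrt{2}$, which turns it into $\frac{\sqrt{2}}{r}\int_{(\pi-\delta)r/\sqrt{2}}^{\infty}e^{-\tau^2}\,d\tau$. Since $(\pi-\delta)r/\sqrt{2}>0$, the inequality (\ref{Millseq1}) applies with $x=(\pi-\delta)r/\sqrt{2}$ and yields $\int_{x}^{\infty}e^{-\tau^2}\,d\tau<e^{-x^2}/(2x)$. Substituting $x^2=(\pi-\delta)^2r^2/2$ and $2x=\sqrt{2}(\pi-\delta)r$, and multiplying by the prefactor $\sqrt{2}/r$, collapses everything to $e^{-(\pi-\delta)^2r^2/2}/\big((\pi-\delta)r^2\big)$, which is precisely the claimed bound.

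There is no serious obstacle here; the argument is essentially two reductions followed by (\ref{Millseq1}). The only points that require care are the verification that $0<\delta<\pi$ forces the $I_k$ to be both disjoint and contained in $[2\pi-\delta,\infty)$ (so that the passage to the single tail integral is legitimate and strict), and the bookkeeping of the $\sqrt{2}/r$ factors so that the constant $1/\big((\pi-\delta)r^2\big)$ emerges exactly rather than up to a harmless multiplicative factor. Since (\ref{Millseq1}) is itself a strict inequality, the resulting bound is strict as stated, and no lower bound on $r$ or $n$ is needed.
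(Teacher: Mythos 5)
Your proof is correct and follows essentially the same route as the paper's: both exploit the disjointness of the intervals (which, after centering at $\pi$, lie in the tail beyond $\pi-\delta$) to bound the sum by the single tail integral $\frac{\sqrt{2}}{r}\int_{(\pi-\delta)r/\sqrt{2}}^{\infty}e^{-\tau^2}\,d\tau$ and then apply the Mills-ratio bound (\ref{Millseq1}). The only cosmetic difference is that you pass to the union before the change of variables while the paper changes variables first; the bookkeeping and the resulting constant are identical.
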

\begin{proof} By a change of variables $\tau=\frac{(\xi-\pi)r}{\sqrt{2}}$, we have by (\ref{Millseq1})
$$
\sum_{k\in\bN}\int_{-\delta+2k\pi}^{\delta+2k\pi} e^{-\frac{(\xi-\pi)^2r^2}{2}}  d\xi 
=\frac{\sqrt{2}}{r}\sum_{k\in\bN}\int_{\frac{[(2k-1)\pi-\delta]r}{\sqrt{2}}}^{\frac{[(2k-1)\pi+\delta]r}{\sqrt{2}}}    e^{-\tau^2}  d\tau 
<\frac{\sqrt{2}}{r}\int_{\frac{(\pi-\delta)r}{\sqrt{2}}}^{\infty}    e^{-\tau^2}  d\tau 
<\frac{e^{-\frac{(\pi-\delta)^2r^2}{2}}}{(\pi-\delta)r^2},
$$
which completes the proof.
\end{proof}

The convolution of two functions $f,g$ on $\bR$ is given as
$$
(f*g)(x):=\frac{1}{\sqrt{2\pi}}\int_{\bR} f(x-t)g(t)dt,\ x\in\bR,
$$
whenever the integral is well-defined (see, e.g., \cite{Debnath15}, pp. 39--41). 

Now, we are ready to estimate the lower bound of $|(E_1f_0)(t_0)|$ mentioned at the beginning of this section.
\begin{theorem}\label{Theorem1} Let $0<\delta<\frac{\pi}{2}$ and $0<\varepsilon<1$.  If there exists $r\ge \frac{2}{\sqrt{\varepsilon(2+\varepsilon)}(\pi-\delta)}$ such that $C_{r,\delta,\varepsilon}>0$, then
$$
|(E_1f_0)(t_0)|\ge \frac{C_{r,\delta,\varepsilon}}{\pi\sqrt{2\delta}} \frac{e^{-\frac{(\pi-\delta)^2r^2}{2}}}{r^3},
$$
where $C_{r,\delta,\varepsilon}$ is given by (\ref{Crdelta}).
\end{theorem}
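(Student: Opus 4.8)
The plan is to control the pointwise value $|(E_1 f_0)(t_0)|$ from below by the $L^1$-average supplied by (\ref{t0}) and to evaluate that average on the Fourier side. First I would use (\ref{t0}) together with $\|(E_1 f_0)\|_{L^1((0,1))}\ge\big|\int_0^1 (E_1 f_0)(t)\,dt\big|$ to reduce the theorem to a lower bound for $\big|\int_0^1 (E_1 f_0)(t)\,dt\big|$. Since $f_0$ in (\ref{f0}) and the regularized series in (\ref{E1E2eq0}) are real, this integral is real, so it suffices to split it into a main part and a tail and to show the main part dominates. To pass to frequencies, set $\psi_r(x):=\sinc(x)e^{-x^2/(2r^2)}$; a short convolution computation gives $\sqrt{2\pi}\,\widehat{\psi_r}(\xi)=\Phi_r(\xi)$, where $\Phi_r(\xi):=\frac{1}{\sqrt\pi}\int_{(\xi-\pi)r/\sqrt2}^{(\xi+\pi)r/\sqrt2}e^{-\tau^2}d\tau$ is exactly the integrand in Lemma \ref{Lemma1}. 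Applying the Poisson summation formula $\sum_{j\in\bZ} f_0(j)e^{-ij\xi}=\sqrt{2\pi}\sum_{k\in\bZ}\widehat{f_0}(\xi+2\pi k)$ (legitimate since $\widehat{f_0}$ is compactly supported) to the semi-discrete convolution in (\ref{E1E2eq0}) yields $\widehat{E_1 f_0}(\xi)=\widehat{f_0}(\xi)-\Phi_r(\xi)\sum_{k\in\bZ}\widehat{f_0}(\xi+2\pi k)$.

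Because $\supp\widehat{f_0}=[-\delta,\delta]$ with $\delta<\pi$, the shifted copies are disjoint: on $[-\delta,\delta]$ only the $k=0$ term survives, so $\widehat{E_1 f_0}=\widehat{f_0}\,(1-\Phi_r)$ there, whereas on each band $[-\delta-2\pi k,\delta-2\pi k]$ with $k\ne0$ one has $\widehat{E_1 f_0}(\xi)=-\Phi_r(\xi)\widehat{f_0}(\xi+2\pi k)$. As $\widehat{E_1 f_0}\in L^1(\bR)$, Fourier inversion and Fubini give $\int_0^1 (E_1 f_0)(t)\,dt=\frac{1}{\sqrt{2\pi}}\int_{\bR}\widehat{E_1 f_0}(\xi)\,e^{i\xi/2}\,\frac{2\sin(\xi/2)}{\xi}\,d\xi$, using $\int_0^1 e^{it\xi}dt=e^{i\xi/2}\frac{2\sin(\xi/2)}{\xi}$.

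The main part is the contribution of $[-\delta,\delta]$. Since $\widehat{f_0}(\xi)e^{i\xi/2}=\frac{1}{\sqrt{2\delta}}$, it equals $\frac{1}{\sqrt{2\pi}\sqrt{2\delta}}\int_{-\delta}^{\delta}(1-\Phi_r(\xi))\frac{2\sin(\xi/2)}{\xi}\,d\xi$, whose integrand is strictly positive on $[-\delta,\delta]$ (note $\Phi_r<1$ as a partial Gaussian integral, and $|\xi|/2\le\delta/2<\pi/2$). Bounding $\frac{2\sin(\xi/2)}{\xi}\ge\frac{2\sin(\delta/2)}{\delta}$, its minimum over $[-\delta,\delta]$, and applying Lemma \ref{Lemma1} to $\int_{-\delta}^{\delta}(1-\Phi_r)\,d\xi$ produces precisely the prefactor $\frac{\sin(\delta/2)}{\pi\sqrt{2\delta}}\frac{e^{-(\pi-\delta)^2r^2/2}}{r^3}$ times the first bracketed term of $C_{r,\delta,\varepsilon}$ in (\ref{Crdelta}). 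The hypothesis $C_{r,\delta,\varepsilon}>0$ forces that first term to exceed the second, hence to be positive, which is exactly the sign condition under which Lemma \ref{Lemma1} applies.

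For the tail, evenness of $\Phi_r$ and of $\frac{2\sin(\xi/2)}{\xi}$ lets me pair $\pm k$ and reduce to the bands $[-\delta+2\pi k,\delta+2\pi k]$, $k\ge1$, on which the phases cancel so the integrand becomes $-\frac{(-1)^k}{\sqrt{2\delta}}\Phi_r(\xi)\frac{2\sin(\xi/2)}{\xi}$. Writing $\xi=2\pi k+u$ gives $|\sin(\xi/2)|=|\sin(u/2)|\le\sin(\delta/2)$, which is the genuine source of the factor $\sin(\delta/2)$ multiplying the subtracted term of (\ref{Crdelta}). Bounding $\Phi_r(\xi)<\frac{1}{\sqrt{2\pi}\,(\xi-\pi)r}e^{-(\xi-\pi)^2r^2/2}$ via Mills' ratio (\ref{Millseq1}), estimating $\frac{1}{\xi(\xi-\pi)}\le\frac{1}{(2\pi-\delta)(\pi-\delta)}$ uniformly over the bands, and summing with Lemma \ref{lemma2} bounds the tail by $\frac{\sin(\delta/2)}{\pi\sqrt{2\delta}}\frac{2}{(2\pi-\delta)(\pi-\delta)^2}\frac{e^{-(\pi-\delta)^2r^2/2}}{r^3}$, matching the negative term of $C_{r,\delta,\varepsilon}$ exactly. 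Subtracting the tail bound from the main bound then collects into $\frac{C_{r,\delta,\varepsilon}}{\pi\sqrt{2\delta}}\frac{e^{-(\pi-\delta)^2r^2/2}}{r^3}$, as claimed. I expect the tail estimate to be the main obstacle: the three savings must be extracted simultaneously and exactly — the $\sin(\delta/2)$ from $\sin(\xi/2)$, the $\frac{1}{(2\pi-\delta)(\pi-\delta)}$ from $\frac{1}{\xi(\xi-\pi)}$, and the Gaussian sum from Lemma \ref{lemma2} — since a cruder pointwise bound on $\Phi_r$ loses a factor of $r$ and would destroy the matching of the constant with the second term of (\ref{Crdelta}).
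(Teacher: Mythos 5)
Your proposal is correct and follows essentially the same route as the paper's proof: the reduction via (\ref{t0}) to $\big|\int_0^1 (E_1f_0)(t)\,dt\big|$, the passage to the Fourier side (your inversion-plus-Fubini step is just the Plancherel pairing with $\widehat{{\bf 1}_{(0,1)}}$ used in the paper), the split into the main band $[-\delta,\delta]$ handled by Lemma \ref{Lemma1} and the aliased bands handled by Mills' ratio, the bounds $\frac{2\sin(\xi/2)}{\xi}\ge\frac{2\sin(\delta/2)}{\delta}$ and $\frac{1}{\xi(\xi-\pi)}\le\frac{1}{(2\pi-\delta)(\pi-\delta)}$, and Lemma \ref{lemma2}. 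All constants match $C_{r,\delta,\varepsilon}$ exactly, so nothing further is needed.
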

\begin{proof}  Let $f\in\cB_\delta$, $0<\delta<\pi$. Then 
\begin{equation}\label{hatf}
\hat{f}(\xi)=\mathring{\hat{f}}(\xi):=\frac{1}{\sqrt{2\pi}}\sum_{j\in\bZ}f(j)e^{-ij\xi},\ \xi\in[-\delta,\delta], 
\end{equation}
where $f(j)=\frac{1}{\sqrt{2\pi}}\int_{-\pi}^{\pi} \hat{f}(\xi)e^{ij\xi}d\xi$, $j\in\bZ$. Note that $\widehat{g\cdot h}=\hat{g}*\hat{h}$ for any $g,h\in L^2(\bR)$, and for any $a>0$, $\widehat{e^{-a x^2}}(\xi)=\frac{1}{\sqrt{2a}}e^{-\frac{\xi^2}{4a}}$ and $\widehat{{\bf 1}_{[-a,a]}}(\xi)=\sqrt{\frac{2}{\pi}}\frac{\sin(a\xi)}{\xi}$, $\xi\in\bR$. By (\ref{E1E2eq0}), we have
$$
\begin{array}{ll}
 \displaystyle{\widehat{E_1f}(\xi)  }
& \displaystyle{ =\hat{f}(\xi)-\sum_{j\in\bZ}f(j)\Big[\Big(\sinc(\cdot)e^{-\frac{\cdot^2}{2r^2}}\Big)(t-j)\Big]^{\hat{\,}}(\xi) }\\  
& \displaystyle{=\hat{f}(\xi)-\sum_{j\in\bZ}f(j)e^{-ij\xi} \Big(\sinc(\cdot)e^{-\frac{\cdot^2}{2r^2}}\Big)^{\hat{\,}}(\xi)  } \\
& \displaystyle{=\hat{f}(\xi)-\sum_{j\in\bZ}f(j)e^{-ij\xi} \Big(\widehat{\sinc(\cdot)} * \widehat{e^{-\frac{\cdot^2}{2r^2}}}\Big)(\xi) } \\
& \displaystyle{=\hat{f}(\xi)-\sum_{j\in\bZ}f(j)e^{-ij\xi} \Big(\frac{1}{\sqrt{2\pi}}{\bf 1}_{[-\pi,\pi]}(\xi)* re^{-\frac{r^2\xi^2}{2}} \Big)}\\
& \displaystyle{=\hat{f}(\xi)-\sum_{j\in\bZ}f(j)e^{-ij\xi}\frac{1}{2\pi}\int_{\xi-\pi}^{\xi+\pi} re^{-\frac{r^2\eta^2}{2}}d \eta,\ \xi\in\bR.}
\end{array}
$$
Note that $\supp\hat{f}\subseteq[-\delta,\delta]$, and $\mathring{\hat{f}}$ in (\ref{hatf}) is a $2\pi$-periodic function on $\bR$ with $\supp \mathring{\hat{f}}=\cup_{k\in\bZ}[-\delta+2k\pi,\delta+2k\pi]$. By (\ref{hatf}), we obtain
\begin{equation}\label{E1bE1bE2}
\begin{array}{ll}
\widehat{E_1f}(\xi)
&\displaystyle{=\hat{f}(\xi)-\mathring{\hat{f}}(\xi)\frac{1}{\sqrt{2\pi}}\int_{\xi-\pi}^{\xi+\pi} re^{-\frac{r^2\eta^2}{2}}d \eta}\\
&\displaystyle{=\hat{f}(\xi)-\mathring{\hat{f}}(\xi)\frac{1}{\sqrt{\pi}}\int_{\frac{(\xi-\pi)r}{\sqrt{2}}}^{\frac{(\xi+\pi)r}{\sqrt{2}}} e^{-\tau^2}d\tau}\\
&\displaystyle{=\hat{f}(\xi)\Big(1-\frac{1}{\sqrt{\pi}}\int_{\frac{(\xi-\pi)r}{\sqrt{2}}}^{\frac{(\xi+\pi)r}{\sqrt{2}}} e^{-\tau^2}d\tau\Big)
+\mathring{\hat{f}}(\xi){\bf 1}_{\bR\setminus[-\pi,\pi]}(\xi)\frac{1}{\sqrt{\pi}}\int_{\frac{(\xi-\pi)r}{\sqrt{2}}}^{\frac{(\xi+\pi)r}{\sqrt{2}}} e^{-\tau^2}d\tau}\\
&\displaystyle{:=(\bE_1\hat{f})(\xi)+(\bE_2\hat{f})(\xi),\ \xi\in\bR,}
\end{array}
\end{equation}
where
\begin{equation}\label{bE1bE2}
\begin{array}{ll}
&\displaystyle{(\bE_1\hat{f})(\xi):=\hat{f}(\xi)\Big(1-\frac{1}{\sqrt{\pi}}\int_{\frac{(\xi-\pi)r}{\sqrt{2}}}^{\frac{(\xi+\pi)r}{\sqrt{2}}} e^{-\tau^2}d\tau\Big), }\\
&\displaystyle{ (\bE_2\hat{f})(\xi):=\frac{1}{\sqrt{\pi}}\sum_{k\in\bZ\setminus\{0\}}\hat{f}(\xi-2k\pi){\bf 1}_{[-\delta+2k\pi,\delta+2k\pi]}(\xi)\int_{\frac{(\xi-\pi)r}{\sqrt{2}}}^{\frac{(\xi+\pi)r}{\sqrt{2}}} e^{-\tau^2}d\tau.}\\
\end{array}
\end{equation}
By (\ref{f0}) and (\ref{t0}), we get
$$
|(E_1f_0)(t_0)|\ge \|E_1f_0\|_{L^1((0,1))}=\int_{\bR} \big|(E_1f_0)(x){\bf 1}_{(0,1)}(x)\big|dx\ge \Big|\int_{\bR} (E_1f_0)(x){\bf 1}_{(0,1)}(x)dx\Big|.
$$
By the Plancherel theorem (see, e.g.,  Theorem 2.5.6 in \cite{Debnath15}) and (\ref{bE1bE2}), we have
\begin{equation}\label{E1}
\begin{array}{ll}
\displaystyle{|(E_1f_0)(t_0)| }
&\displaystyle{\ge  \Big|\int_{\bR} \widehat{E_1f_0}(\xi)\overline{\widehat{{\bf 1}_{(0,1)}}}(\xi)d\xi \Big| }\\
&\displaystyle{=\Big|\int_{\bR} (\bE_1 \widehat{f_0})(\xi)\widehat{{\bf 1}_{(0,1)}}(-\xi) d\xi+\int_{\bR} (\bE_2\widehat{f_0})(\xi)\widehat{{\bf 1}_{(0,1)}}(-\xi) d\xi \Big| }\\
&\displaystyle{\ge \Big|\int_{-\delta}^{\delta}\widehat{f_0}(\xi)\Big(1-\frac{1}{\sqrt{\pi}}\int_{\frac{(\xi-\pi)r}{\sqrt{2}}}^{\frac{(\xi+\pi)r}{\sqrt{2}}} e^{-\tau^2}d\tau\Big)\widehat{{\bf 1}_{(0,1)}}(-\xi) d\xi}\\
&\displaystyle{\quad +\frac{1}{\sqrt{\pi}}\sum_{k\in\bZ\setminus\{0\}}\int_{-\delta+2k\pi}^{\delta+2k\pi}\widehat{f_0}(\xi-2k\pi)\Big(\int_{\frac{(\xi-\pi)r}{\sqrt{2}}}^{\frac{(\xi+\pi)r}{\sqrt{2}}} e^{-\tau^2}d\tau\Big)\widehat{{\bf 1}_{(0,1)}}(-\xi) d\xi\Big|.}\\
\end{array}
\end{equation}
Note that $\widehat{{\bf 1}_{(0,1)}}(\xi)=\frac{1}{\sqrt{2\pi}} \frac{e^{-i\xi}-1}{-i\xi}$ and $\widehat{f_0}(\xi)=\frac{1}{\sqrt{2\delta}}{\bf 1}_{[-\delta,\delta]}(\xi) e^{-i \xi/2}$, $\xi\in\bR$.
By (\ref{Millseq1}) and (\ref{Millseq2}),  we have
{\small$$
\begin{array}{ll}
&\displaystyle{|(E_1f_0)(t_0)|}\\
&\displaystyle{\ge \frac{1}{2\sqrt{\pi\delta}}\Big| \int_{-\delta}^{\delta}e^{-i \frac{\xi}{2}}\Big(1-\frac{1}{\sqrt{\pi}}\int_{\frac{(\xi-\pi)r}{\sqrt{2}}}^{\frac{(\xi+\pi)r}{\sqrt{2}}} e^{-\tau^2}d\tau\Big)  \frac{e^{i\xi}-1}{i\xi} d\xi  
 +\frac{1}{\sqrt{\pi}}\sum_{k\in\bZ\setminus\{0\}}\int_{-\delta+2k\pi}^{\delta+2k\pi}e^{ik\xi}e^{-i \frac{\xi}{2}}\Big(\int_{\frac{(\xi-\pi)r}{\sqrt{2}}}^{\frac{(\xi+\pi)r}{\sqrt{2}}} e^{-\tau^2}d\tau\Big) \frac{e^{i\xi}-1}{i\xi} d\xi\Big|}\\
&\displaystyle{= \frac{1}{2\sqrt{\pi\delta}}\Big| \int_{-\delta}^{\delta}\Big(1-\frac{1}{\sqrt{\pi}}\int_{\frac{(\xi-\pi)r}{\sqrt{2}}}^{\frac{(\xi+\pi)r}{\sqrt{2}}} e^{-\tau^2}d\tau\Big)\frac{\sin(\xi/2)}{\xi/2} d\xi  
+\frac{1}{\sqrt{\pi}}\sum_{k\in\bZ\setminus\{0\}}(-1)^k\int_{-\delta+2k\pi}^{\delta+2k\pi}\Big(\int_{\frac{(\xi-\pi)r}{\sqrt{2}}}^{\frac{(\xi+\pi)r}{\sqrt{2}}} e^{-\tau^2}d\tau\Big) \frac{\sin(\xi/2)}{\xi/2} d\xi\Big|.  }\\
\end{array}
$$}
Observe that the integrand
$$
\Big(\int_{\frac{(\xi-\pi)r}{\sqrt{2}}}^{\frac{(\xi+\pi)r}{\sqrt{2}}} e^{-\tau^2}d\tau\Big) \frac{\sin(\xi/2)}{\xi/2},\ \xi\in\bR,
$$
is an even function on $\cup_{k\in\bZ\setminus\{0\}}[-\delta+2k\pi,\delta+2k\pi]$, and $(-1)^k=(-1)^{-k}$ for all $k\in\bN$.  Thus,
$$
\begin{array}{ll}
&\displaystyle{|(E_1f_0)(t_0)|}\\
&\displaystyle{\ge \frac{1}{2\sqrt{\pi\delta}}\Big| \int_{-\delta}^{\delta}\Big(1-\frac{1}{\sqrt{\pi}}\int_{\frac{(\xi-\pi)r}{\sqrt{2}}}^{\frac{(\xi+\pi)r}{\sqrt{2}}} e^{-\tau^2}d\tau\Big)\frac{\sin(\xi/2)}{\xi/2} d\xi  
+\frac{4}{\sqrt{\pi}}\sum_{k\in\bN}(-1)^k\int_{-\delta+2k\pi}^{\delta+2k\pi}\Big(\int_{\frac{(\xi-\pi)r}{\sqrt{2}}}^{\frac{(\xi+\pi)r}{\sqrt{2}}} e^{-\tau^2}d\tau\Big) \frac{\sin(\xi/2)}{\xi} d\xi\Big| }\\
&\displaystyle{\ge \frac{1}{2\sqrt{\pi\delta}} \Big[\int_{-\delta}^{\delta}\Big(1-\frac{1}{\sqrt{\pi}}\int_{\frac{(\xi-\pi)r}{\sqrt{2}}}^{\frac{(\xi+\pi)r}{\sqrt{2}}} e^{-\tau^2}d\tau\Big)\frac{\sin(\xi/2)}{\xi/2} d\xi  
-\frac{4}{\sqrt{\pi}}\sum_{k\in\bN} \int_{-\delta+2k\pi}^{\delta+2k\pi}\Big(\int_{\frac{(\xi-\pi)r}{\sqrt{2}}}^{\frac{(\xi+\pi)r}{\sqrt{2}}} e^{-\tau^2}d\tau\Big) \frac{|\sin(\xi/2)|}{\xi} d\xi\Big]. }
\end{array}
$$
Notice that $ \frac{\sin(\delta/2)}{\delta/2}\le \frac{\sin(\xi/2)}{\xi/2}\le 1$ for all $\xi\in[-\delta,\delta]$ 
and  $0\le \frac{|\sin(\xi/2)|}{\xi}\le \frac{\sin(\delta/2)}{2\pi-\delta}$ for all $\xi\in \cup_{k\in\bN}[-\delta+2k\pi,\delta+2k\pi]$. As a result, we compute
$$
\begin{array}{ll}
&\displaystyle{|(E_1f_0)(t_0)|}\\
&\displaystyle{\ge \frac{1}{2\sqrt{\pi\delta}}\Big[ \frac{2\sin(\delta/2)}{\delta} \int_{-\delta}^{\delta}\Big(1-\frac{1}{\sqrt{\pi}}\int_{\frac{(\xi-\pi)r}{\sqrt{2}}}^{\frac{(\xi+\pi)r}{\sqrt{2}}} e^{-\tau^2}d\tau\Big) d\xi  
 -\frac{4\sin(\delta/2)}{\sqrt{\pi}(2\pi-\delta)}\sum_{k\in\bN}\int_{-\delta+2k\pi}^{\delta+2k\pi}\Big(\int_{\frac{(\xi-\pi)r}{\sqrt{2}}}^{\infty} e^{-\tau^2}d\tau\Big)  d\xi\Big]}\\
 &\displaystyle{\ge \frac{\sin(\delta/2)}{\sqrt{\pi\delta}}\Big[ \frac{1}{\delta} \int_{-\delta}^{\delta}\Big(1-\frac{1}{\sqrt{\pi}}\int_{\frac{(\xi-\pi)r}{\sqrt{2}}}^{\frac{(\xi+\pi)r}{\sqrt{2}}} e^{-\tau^2}d\tau\Big) d\xi  
 -\frac{2}{\sqrt{\pi}(2\pi-\delta)}\sum_{k\in\bN}\int_{-\delta+2k\pi}^{\delta+2k\pi}\frac{e^{-\frac{(\xi-\pi)^2r^2}{2}}}{\sqrt{2}(\xi-\pi)r} d\xi\Big]}\\
 &\displaystyle{\ge \frac{\sin(\delta/2)}{\sqrt{\pi\delta}}\Big[ \frac{1}{\delta} \int_{-\delta}^{\delta}\Big(1-\frac{1}{\sqrt{\pi}}\int_{\frac{(\xi-\pi)r}{\sqrt{2}}}^{\frac{(\xi+\pi)r}{\sqrt{2}}} e^{-\tau^2}d\tau\Big) d\xi 
 -\frac{2}{\sqrt{2\pi}(2\pi-\delta)(\pi-\delta)r} \sum_{k\in\bN}\int_{2k\pi-\delta}^{2k\pi+\delta} e^{-\frac{(\xi-\pi)^2r^2}{2}}  d\xi\Big].}\\
\end{array}
$$
By Lemmas \ref{Lemma1} and \ref{lemma2}, we have
$$
\begin{array}{ll}
&\displaystyle{|(E_1f_0)(t_0)|}\\
&\displaystyle{\ge \frac{\sin(\delta/2)}{\sqrt{\pi\delta}}\Big[\frac{1}{\delta}\frac{2\sqrt{2}}{(2+\varepsilon)(\pi+\delta) \sqrt{\pi}}\Big(\frac{2}{(2+\varepsilon)(\pi-\delta)}-\frac{e^{-2\pi\delta r^2}}{\pi+\delta}\Big)\frac{e^{-\frac{(\pi-\delta)^2r^2}{2}}}{r^3} 
-\frac{2}{\sqrt{2\pi}(2\pi-\delta)(\pi-\delta)r}\frac{e^{-\frac{(\pi-\delta)^2r^2}{2}}}{(\pi-\delta)r^2}\Big]}\\
&\displaystyle{\ge \frac{\sin(\delta/2)}{\pi\sqrt{\delta}}\Big[\frac{2\sqrt{2}}{(2+\varepsilon)(\pi+\delta) \delta}\Big(\frac{2}{(2+\varepsilon)(\pi-\delta)}-\frac{e^{-2\pi\delta r^2}}{\pi+\delta}\Big) 
-\frac{2}{\sqrt{2}(2\pi-\delta)(\pi-\delta)^2} \Big] \frac{e^{-\frac{(\pi-\delta)^2r^2}{2}}}{r^3} }\\
&\displaystyle{\ge \frac{\sin(\delta/2)}{\pi\sqrt{2\delta}}\Big[\frac{4}{(2+\varepsilon)(\pi+\delta) \delta}\Big(\frac{2}{(2+\varepsilon)(\pi-\delta)}-\frac{e^{-2\pi\delta r^2}}{\pi+\delta}\Big) 
-\frac{2}{(2\pi-\delta)(\pi-\delta)^2} \Big] \frac{e^{-\frac{(\pi-\delta)^2r^2}{2}}}{r^3},}\\
\end{array}
$$
which completes the proof.
\end{proof}

\vspace{1cm}

{\bf Proof of Theorem \ref{Theorem}:} By (\ref{f0}),  for any $n\ge2$ and $t\in(0,1)$, we estimate
$$
|(E_2f_0)(t)|\le \frac{1}{\sqrt{\pi\delta}}\sum_{j\notin(-n,n]} \Big|\frac{\sin((j-\frac12)\delta)}{(j-\frac12)}  \sinc(t-j) \Big| e^{-\frac{(t-j)^2}{2r^2}}
< \frac{1}{\pi n(n+\frac12)\sqrt{\pi\delta}}\sum_{j\notin(-n,n]}  e^{-\frac{(t-j)^2}{2r^2}}.
$$
By (\ref{Millseq1}),  for any $n\ge2$ and $t\in(0,1)$, we arrive at
$$
\sum_{j\notin(-n,n]}  e^{-\frac{(t-j)^2}{2r^2}}\le 2\sum_{j=n}^{\infty}e^{-\frac{j^2}{2r^2}}=2\sqrt{2}r\sum_{j=n}^{\infty}e^{-\frac{j^2}{2r^2}}\frac{1}{\sqrt{2}r} 
<2\sqrt{2}r\int_{\frac{n-1}{\sqrt{2}r}}^{\infty}e^{-\tau^2}d\tau <\frac{2r^2}{n-1}e^{-\frac{(n-1)^2}{2r^2}}.
$$
Thus,
\begin{equation}\label{E2}
|(E_2f_0)(t_0)|<\frac{2r^2}{\pi n(n+\frac12) (n-1)\sqrt{\pi\delta}}e^{-\frac{(n-1)^2}{2r^2}}, \ n\ge2.  
\end{equation}
Combining (\ref{E1E2}), (\ref{f0}), (\ref{t0}), (\ref{E2}), and Theorem \ref{Theorem1}, it follows easily that
$$
\begin{array}{ll}
\displaystyle{\sup_{\|f\|_{\cB_\delta}\le 1}\sup_{t\in(0,1)} |f(t)-S_{n,r}f(t)|}
&\displaystyle{\ge |(E_1f_0)(t_0)|- |(E_2f_0)(t_0)| }\\
&\displaystyle{\ge \frac{C_{r,\delta,\varepsilon}}{\pi\sqrt{2\delta}} \frac{e^{-\frac{(\pi-\delta)^2r^2}{2}}}{r^3}-\frac{2r^2 e^{-\frac{(n-1)^2}{2r^2}}}{\pi n(n+\frac12) (n-1)\sqrt{\pi\delta}}, }\\
\end{array}
$$
where $C_{r,\delta,\varepsilon}$ is given by (\ref{Crdelta}). The proof is complete. \hspace{6.5cm} $\square$

\vspace{1cm}

At the end of this section, we turn to proving Theorem \ref{Theorem2}.  \\

{\bf Proof of Theorem \ref{Theorem2}: } Let $f\in\cB_\delta$ and $\|f\|_{\cB_\delta}\le 1$. Using (\ref{bE1bE2}) and the Cauchy-Schwartz inequality, we have
$$
\begin{array}{ll}
\displaystyle{\frac{1}{\sqrt{2\pi}}\|\bE_2\hat{f}\|_{L^1(\bR)}}
&\displaystyle{\le \frac{1}{\sqrt{2}\pi} \sum_{0\ne k\in\bZ} \int_{-\delta+2k\pi}^{\delta+2k\pi}|\hat{f}(\xi-2k\pi)|\Big(\int_{\frac{(\xi-\pi)r}{\sqrt{2}}}^{\frac{(\xi+\pi)r}{\sqrt{2}}} e^{-\tau^2}d\tau \Big) d\xi }\\
&\displaystyle{\le \frac{\sqrt{2}}{\pi} \sum_{k\in\bN} \Big[\int_{-\delta+2k\pi}^{\delta+2k\pi} \Big( \int_{\frac{(\xi-\pi)r}{\sqrt{2}}}^{\frac{(\xi+\pi)r}{\sqrt{2}}} e^{-\tau^2}d\tau\Big)^2 d\xi\Big]^{1/2} }\\
&\displaystyle{\le \frac{\sqrt{2}}{\pi} \sum_{k\in\bN} \Big[\int_{-\delta+2k\pi}^{\delta+2k\pi} \Big( \int_{\frac{(\xi-\pi)r}{\sqrt{2}}}^{\infty} e^{-\tau^2}d\tau\Big)^2 d\xi\Big]^{1/2}. }
\end{array}
$$
By (\ref{Millseq1}), we obtain
$$
\begin{array}{ll}
\displaystyle{\frac{1}{\sqrt{2\pi}}\|\bE_2\hat{f}\|_{L^1(\bR)}}
&\displaystyle{\le \frac{\sqrt{2}}{\pi} \sum_{k\in\bN} \Big[\int_{-\delta+2k\pi}^{\delta+2k\pi} \Big(\frac{e^{-\frac{(\xi-\pi)^2r^2}{2}}}{\sqrt{2}(\xi-\pi)r}\Big)^2 d\xi\Big]^{1/2}}\\
&\displaystyle{\le \frac{1}{\pi(\pi-\delta)r} \sum_{k\in\bN} \Big[\int_{-\delta+2k\pi}^{\delta+2k\pi} e^{-(\xi-\pi)^2r^2} d\xi\Big]^{1/2}}\\
&\displaystyle{= \frac{1}{\pi(\pi-\delta)r} \sum_{k\in\bN} \Big[\frac{1}{r} \int_{[(2k-1)\pi-\delta]r}^{[(2k-1)\pi+\delta]r} e^{-\tau^2} d\tau\Big]^{1/2}}\\
&\displaystyle{\le \frac{1}{\pi(\pi-\delta)r} \sum_{k\in\bN} \Big[\frac{1}{r} \int_{[(2k-1)\pi-\delta]r}^{\infty} e^{-\tau^2} d\tau\Big]^{1/2}}\\
&\displaystyle{\le \frac{1}{\pi(\pi-\delta)r} \sum_{k\in\bN} \Big[\frac{1}{r}\frac{1}{2[(2k-1)\pi-\delta]r}e^{-[(2k-1)\pi-\delta]^2r^2}\Big]^{1/2}}\\
&\displaystyle{\le \frac{1}{\pi(\pi-\delta)\sqrt{2(\pi-\delta)}r^2} \sum_{k\in\bN} e^{-[(2k-1)\pi-\delta]^2r^2/2}.}\\
\end{array}
$$
By (\ref{Millseq1}), we compute
$$
\sum_{k=3}^{\infty}e^{-\frac{[(2k-1)\pi-\delta]^2r^2}{2}}=\frac{1}{\sqrt{2}\pi r}\sum_{k=3}^{\infty}e^{-\frac{[(2k-1)\pi-\delta]^2r^2}{2}}\sqrt{2}\pi r\le \frac{1}{\sqrt{2}\pi r}\int_{\frac{(3\pi-\delta)r}{\sqrt{2}}}^{\infty} e^{-\tau^2}d\tau
\le \frac{e^{-\frac{(3\pi-\delta)^2r^2}{2}}}{2\pi(3\pi-\delta)r^2}.
$$
It follows that
$$
\begin{array}{ll}
\displaystyle{\sum_{k\in\bN} e^{-\frac{[(2k-1)\pi-\delta]^2r^2}{2}}  }
&\displaystyle{=e^{-\frac{(\pi-\delta)^2r^2}{2}}+e^{-\frac{(3\pi-\delta)^2r^2}{2}}+\sum_{k=3}^{\infty}e^{-\frac{[(2k-1)\pi-\delta]^2r^2}{2}}  }\\
&\displaystyle{\le  \Big[1+\big(1+\frac{1}{2\pi(3\pi-\delta)r^2}\big)e^{-2\pi(2\pi-\delta)r^2}\Big]  e^{-\frac{(\pi-\delta)^2r^2}{2}}. }\\
\end{array}
$$
Thus, we obtain
\begin{equation}\label{upperboundeq1}
\frac{1}{\sqrt{2\pi}}\|\bE_2\hat{f}\|_{L^1(\bR)}\le \frac{1+\big(1+\frac{1}{2\pi(3\pi-\delta)r^2}\big)e^{-2\pi(2\pi-\delta)r^2} }{\pi(\pi-\delta)\sqrt{2(\pi-\delta)}r^2}e^{-\frac{(\pi-\delta)^2r^2}{2}}. 
\end{equation}
By (2.12) in \cite{LinZhang17}, this leads to
\begin{equation}\label{upperboundeq2}
\frac{1}{\sqrt{2\pi}}\|\bE_1\hat{f}\|_{L^1(\bR)}\le \frac{\sqrt{2\delta} }{\pi(\pi-\delta)r}e^{-\frac{(\pi-\delta)^2r^2}{2}}. 
\end{equation}
By (2.13) in \cite{LinZhang17}, we can assert that
\begin{equation}\label{upperboundeq3}
|E_2(t)|\le \frac{r e^{-\frac{(n-1)^2}{2r^2}}}{\pi n\sqrt{n-1}},\ t\in(0,1). 
\end{equation}
Applying the Riemann-Lebesgue lemma, we get
$$
 |(E_1f)(t)+(E_2f)(t)|\le |(E_1f)(t)|+|(E_2f)(t)|\le  \frac{1}{\sqrt{2\pi}}\|\widehat{E_1f}\|_{L^1(\bR)}+|(E_2f)(t)|,\ t\in(0,1).
$$
Combining (\ref{bE1bE2}),  (\ref{upperboundeq1}),  (\ref{upperboundeq2}), and (\ref{upperboundeq3}), it follows immediately that for each $t\in(0,1)$
$$
\begin{array}{ll}
\displaystyle{ |(E_1f)(t)+(E_2f)(t)| }
&\displaystyle{\le  \frac{1}{\sqrt{2\pi}}\|\bE_1\hat{f}+\bE_2\hat{f}\|_{L^1(\bR)} +|(E_2f)(t)| }\\
&\displaystyle{\le \frac{1}{\sqrt{2\pi}}\|\bE_1\hat{f}\|_{L^1(\bR)}+\frac{1}{\sqrt{2\pi}}\|\bE_2\hat{f}\|_{L^1(\bR)}+|(E_2f)(t)| }\\
&\displaystyle{\le \Big[\frac{1+\big(1+\frac{1}{2\pi(3\pi-\delta)r^2}\big)e^{-2\pi(2\pi-\delta)r^2}}{\sqrt{2(\pi-\delta)}r}+\sqrt{2\delta}\Big]\frac{e^{-\frac{(\pi-\delta)^2r^2}{2}}}{\pi (\pi-\delta)r} +\frac{r e^{-\frac{(n-1)^2}{2r^2}}}{\pi n\sqrt{n-1}}. }\\
\end{array}
$$
The proof is hence complete. \hspace{11cm} $\square$

\section{Numerical experiments}

We shall present numerical experiments to demonstrate that the lower error bound estimation in Theorem \ref{Theorem} for the truncated Gaussian regularized Shannon sampling series (\ref{GR}) is optimal.  To this end, we need to recall the lower error bound (\ref{remarkeq1}) and the upper error bound (\ref{remarkeq2}).

In what follows, we let
$\delta=\frac{\pi}{4}$, $\varepsilon=\frac{1}{7}$, $n\ge 7$ and $r=\sqrt{\frac{n-1}{\pi-\delta}}$. 
Remember that $C_{r,\delta,\varepsilon}$ and $f_0$  are given by (\ref{Crdelta}) and (\ref{f0}), respectively.
An easy computation shows $C_{r,\delta,\varepsilon}\ge 0.0666687$ for all $n\ge7$. We can easily verify that $n\ge 7$ satisfies the condition (\ref{n}).

We shall reconstruct the function values of $f_0$ on $(0,1)$ from $\{f_0(j):j=-n+1,-n+2,\dots, n\}$ by the truncated Gaussian regularized Shannon sampling series
$$
(S_{n,r} f_0)(t)=\sum_{j=-n+1}^n f_0(j)\sinc(t-j)e^{-\frac{(t-j)^2(\pi-\delta)}{2(n-1)}},\ t\in(0,1).
$$
The error of reconstruction is measured by
$$
E(f_0-S_{n,r}f_0):=\max_{1\le j\le 99}\big| (f_0-S_{n,r}f_0)\big(\frac{j}{100}\big)\big|.
$$
This reconstruction error is to be compared with both the lower error bound (\ref{remarkeq1})  denoted by 
$$
E_{\delta,n}:=\Big[0.0666687(\pi-\delta)\sqrt{\pi-\delta}-\frac{2\sqrt{2}}{(\pi-\delta)\sqrt{\pi}} \frac{(n-1)\sqrt{n-1}}{n(n+\frac12)}\Big]
\frac{e^{-\frac{(\pi-\delta)(n-1)}{2}}}{\pi\sqrt{2\delta}(n-1)\sqrt{n-1}},\ n\ge7
$$ 
and the upper error bound (\ref{remarkeq2}) denoted by
$$
{\bf E}_{\delta,n}:=\Big(\sqrt{2\delta}+\frac{\sqrt{n-1}}{n}+\frac{1+(1+\frac{1}{6\pi})e^{-4\pi}}{\sqrt{2(n-1)}}\Big)\frac{e^{-\frac{(\pi-\delta)(n-1)}{2}}}{\pi\sqrt{(\pi-\delta)(n-1)}}, n\ge2.
$$
The above three errors, namely the lower error bound $E_{\delta,n}$, the reconstruction error $E(f_0-S_{n,r}f_0)$  and the upper error bound ${\bf E}_{\delta,n}$, for $n=7,9,\dots,23,25$ are listed in Table \ref{Tab1}. We also plot $\log E_{\delta,n}$, $\log E(f_0-S_{n,r}f_0)$, and $\log {\bf E}_{\delta,n}$ for $n=7,8,\dots,24,25$ in Figure \ref{fig1}. 

To sum up, we proved that the truncated Gaussian regularized Shannon sampling formula converges exponentially rapidly to the bandlimited function and our lower error bound is optimal.

\begin{table}[htbp]
\centering
\begin{tabular}{|c|c|c|c|} \hline
& $E_{\delta,n}$ &$E(f_0-S_{n,r}f_0)$ &${\bf E}_{\delta,n}$  \\ \hline
 n=7 & 7.5816e-07 & 1.6125e-05 &1.3637e-04  \\  \hline
 n=9 & 5.6056e-08 & 1.0218e-06 & 1.0754e-05 \\  \hline
 n=11 & 4.4118e-09 & 7.1272e-08 &8.8497e-07  \\  \hline
 n=13 & 3.5746e-10 & 5.2752e-09 & 7.4813e-08 \\  \hline
 n=15 & 2.9493e-11 &4.0037e-10  &6.4423e-09  \\  \hline
 n=17 & 2.4661e-12 & 3.1085e-11 &5.6227e-10  \\  \hline
  n=19 &2.0841e-13  &2.4961e-12  &4.9577e-11  \\  \hline
 n=21 &1.7768e-14  &2.0497e-13  &4.4065e-12  \\  \hline
 n=23& 1.5261e-15 & 1.6963e-14 & 3.9420e-13 \\  \hline
 n=25 & 1.3193e-16 &1.4843e-15  & 3.5451e-14 \\  \hline
\end{tabular}
\caption{Three errors when $\delta=\frac{\pi}{4}$.} \label{Tab1}
\end{table}

\begin{figure}[htbp]
\centering
  \includegraphics[width=7in]{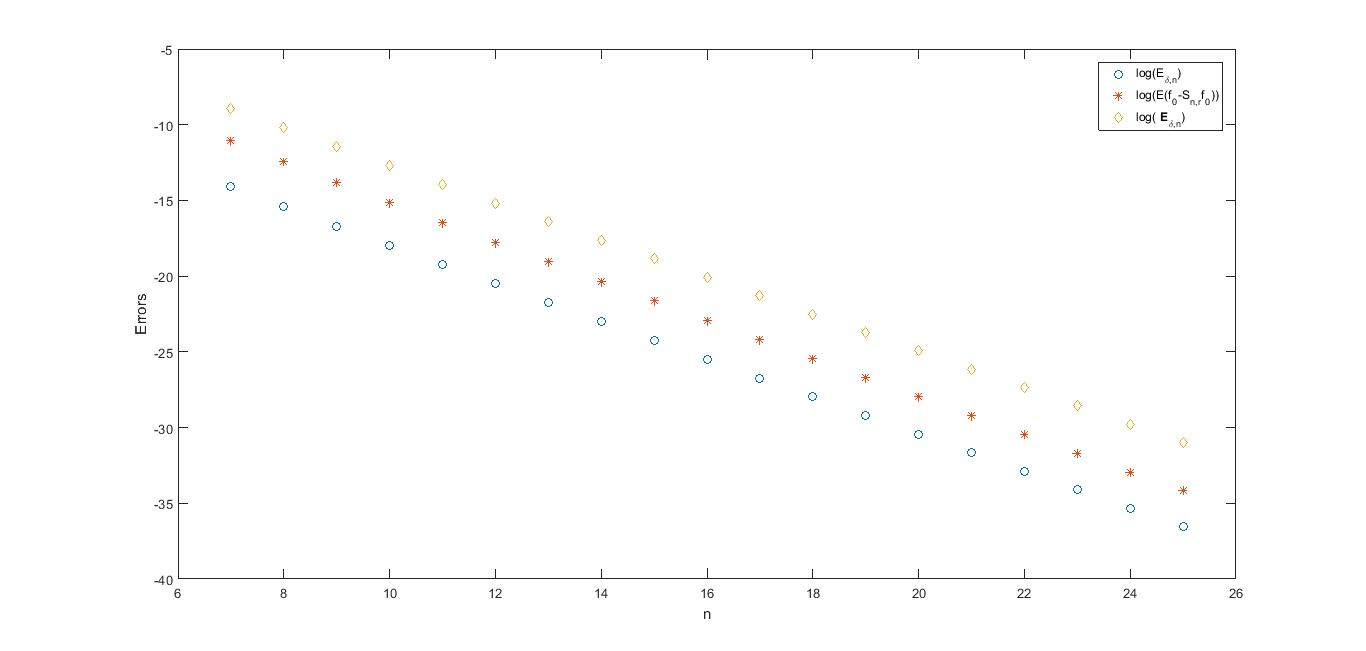}\\
  \caption{Comparison of $\log (E_{\delta,n})$, $\log E(f_0-S_{n,r}f_0)$, and $\log({\bf E}_{\delta,n})$ when $\delta=\frac{\pi}{4}$.}\label{fig1}
\end{figure}

{\small
\bibliographystyle{amsplain}

}

\end{document}